\newtheorem{thmIntro}{Theorem}    
\newtheorem{theorem}{Theorem}[section]
\newtheorem{lemma}[theorem]{Lemma}
\newtheorem{proposition}[theorem]{Proposition}
\theoremstyle{definition}
\newtheorem{definition}[theorem]{Definition}
\theoremstyle{remark}
\newtheorem{remark}[theorem]{Remark}
\numberwithin{equation}{section}
\newcommand{\on}[1]{\operatorname{#1}}
\newcommand{\xg}{\backslash} 
\newcommand{\lieg}{\mathfrak{g}} 
\newcommand{\liel}{\mathfrak{l}} 
\newcommand{\ten}{\otimes} 
\newcommand{\fz}{\mathbb{Z}} 
\newcommand{\fn}{\mathbb{N}} 
\newcommand{\fk}{\mathbb{K}} 
\newcommand{\ii}{\mathbb{I}} 
\newcommand{\ra}{\rightarrow} 
\newcommand{\mt}{\mapsto} 
\begin{document}
\title[Centers of Universal Enveloping Algebras]{Centers of Universal Enveloping Algebras}
\author{Yaping Yang} \address[Yaping Yang]{School of Mathematics, Yunnan Normal University, Kunming 650500, China} \email{yypiwan@126.com}

\author{Daihao Zeng}
\address[Daihao Zeng]{School of Statistics and Mathematics, Yunnan University of Finance and Economics, Kunming 650221, China} \email{202202110812@stu.ynufe.edu.cn}
\subjclass[2010]{17B05, 17B35, 17B50, 17B65}
\keywords{universal enveloping algebra, current algebra, loop algebra}
\date{\today}
\begin{abstract}
The universal enveloping algebra $U(\lieg)$ of a current (super)algebra or loop (super)algebra $\lieg$ is considered over an algebraically closed field $\fk$ with characteristic $p\geq 0$. This paper focuses on the structure of the center $Z(\lieg)$ of $U(\lieg)$. In the case of zero characteristic, $Z(\lieg)$ is generated by the centers of $\lieg$. In the case of prime characteristic, $Z(\lieg)$ is generated by the centers of $\lieg$ and the $p$-centers of $U(\lieg)$. We also study the structure of $Z(\lieg)$ in the semisimple Lie (super)algebra.
\end{abstract}
\maketitle
\section{Introduction}

Let $\liel$ be a Lie (super)algebra over a field $\fk$. Its universal enveloping algebra $U(\liel)$ is an associative (super)algebra with unity together with a homomorphism of Lie (super)algebras $\iota:\liel \mapsto U(\liel)$ characterized by the universal property. It associates the representations of a Lie (super)algebra to representations of an associative (super)algebra (see \cite{CW,Hu2}). The center $Z(\liel)$ of $U(\liel)$ is defined as the set $$\{z \in U(\liel) \mid [z,u]=0, \forall u\in U(\liel)\},$$
where $[z,u]=zu-uz$ (resp. $[z,u]=zu-(-1)^{\mid z\mid \cdot \mid u\mid}uz$) if $\liel$ is a Lie algebra (resp. Lie superalgebra). Let $\lieg$ be the current algebra of the finite-dimensional classical simple Lie algebras $\liel$ over a complex field, and $\lieg$ has a Lie algebraic structure (see Definition \ref{2.0}). Molev discussed the connection between the center $Z(\lieg)$ of $U(\lieg)$ and the center of the Yangians (see \cite[1.7]{Mo}). Subsequently, more scholars have investigated this connection and used it as a foundation for calculating the centers of specific (super) Yangians (see \cite{BT,CH}).

In zero characteristic, it is known that the center of the universal enveloping algebra of a reductive Lie algebra in a complex field is isomorphic to its homomorphic image under twisted Harish-Chandra homomorphisms (see \cite[1.10]{Hu1}). Konno (\cite{Ko}) provided a precise description of the Harish-Chandra homomorphic image of $\mathfrak{sl}(m,1)$ in the case of Lie superalgebras. For a finite-dimensional nilpotent Lie algebra $\liel$ over a field with zero characteristic, Dixmier (\cite{Di}) proves that $Z(\liel)$ is a unique factorization domain and Moeglin (\cite{Moe}) proves that each height one prime ideal in $U(\liel)$ is generated by a central element. For finite-dimensional classical simple Lie algebras $\liel$ over an algebraically closed field of characteristic zero, Gauger (\cite{Ga}) obtains algebraically independent generators of $Z(\liel)$.

In prime characteristic, Veldkamp (\cite{Ve}) proves that $Z(\liel)$ is generated by the generators of the $p$-center and the Harish-Chandra center, which $\liel$ is the Lie algebra of a split semisimple algebraic group $G$ over a perfect field $\fk$ with characteristic $p>0$ satisfying a condition. Many subsequent studies of $Z(\liel)$ have Veldkamp's theorem in mind. Braun (\cite{Br}) generalized the works of Dixmier (\cite{Di}) and Moeglin (\cite{Moe}) to the prime characteristic field. For a finite-dimensional solvable Lie algebra $\liel$ over a prime characteristic field, Braun and Vernik (\cite{BV}) obtain several equivalent conclusions for $Z(\liel)$ being the unique factorization domain. Further, when $\liel$ is a Lie algebra of a connected reductive group over an algebraically closed field of prime characteristic, R. Tange (\cite{Ta}) proves that $Z(\liel)$ is the unique factorization domain. Brundan and Topley (\cite{BT}) obtained the free generators of the centers of the universal enveloping algebra of shifted current algebra of type A over a prime characteristic field. H. Chang and H. Hu (\cite{CH}) discusses the free generators of the center $Z(\mathfrak{gl}_{m\mid n}[x])$ of $U(\mathfrak{gl}_{m\mid n}[x])$.

Let $\lieg$ denote the current (super)algebra or loop (super)algebra of Lie (super)algebra $\liel$. The main goal of this paper is to obtain the generators of the center $Z(\lieg)$. Our main results are as follows.

\begin{thmIntro}\label{1.1}
If $\liel$ is a Lie (super)algebra over $\fk$ with $\fk=\overline{\fk}$ and $\on{ch}(\fk)=0$, then the center $Z(\lieg)$ of $U(\lieg)$ is equal to $\fk[ e_{jr}\mid j\in J, r\in\ii ]$. In particular, if $C(\liel)\neq \left \{ 0 \right \}$, $Z(\lieg)$ is freely generated by $\{ e_{jr}\mid j\in J, r\in\ii \}$. 
\end{thmIntro}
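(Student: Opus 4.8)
The plan is to analyze the center of $U(\lieg)$ where $\lieg = \liel \otimes \fk[t]$ (current algebra) or $\lieg = \liel \otimes \fk[t,t^{-1}]$ (loop algebra), using the grading by powers of $t$. Write $\lieg = \bigoplus_{r \in \ii} \liel_r$ where $\liel_r \cong \liel$ sits in degree $r$ (with $\ii = \fn$ for current algebras and $\ii = \fz$ for loop algebras), so that $U(\lieg)$ inherits a $\fz$-grading (or $\fn$-grading). The first step is to observe that the center $Z(\lieg)$ is a graded subspace: since each homogeneous component of $U(\lieg)$ is preserved by $\on{ad}(x)$ for $x \in \liel_0$ and more generally the bracket is homogeneous, any central element decomposes into homogeneous central pieces. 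So it suffices to determine the homogeneous central elements.

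Next I would use the adjoint action of $\liel_0 \cong \liel$ on $U(\lieg)$. A central element must in particular be $\liel_0$-invariant. The key structural input is that $\liel = [\liel,\liel] \oplus C(\liel)$ is not available in general, but one can still argue as follows: for the semisimple part, invariance under $\on{ad}(\liel_0)$ together with the PBW theorem forces strong restrictions. More precisely, I would set up a filtration/PBW argument: take a homogeneous central element $z$, look at its leading symbol in $\on{gr} U(\lieg) = S(\lieg)$, and show the symbol is $\liel$-invariant under the full adjoint action of $\lieg$ (not just $\liel_0$) because $[z, \liel_r] = 0$ for all $r$. The invariants of $S(\lieg)$ under $\on{ad}(\lieg)$ in characteristic zero, for $\liel$ with the relevant structure (e.g. semisimple or more generally when the derived subalgebra acts with no invariants beyond scalars on each graded piece), should be exactly the polynomial algebra generated by the images of central elements $C(\liel)_r = \{e_{jr} : j \in J\}$ in each degree $r$. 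Here $\{e_j : j \in J\}$ is presumably a basis of $C(\liel)$, and $e_{jr} = e_j \otimes t^r$.

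The argument then concludes by a standard lifting/induction on the filtration degree: given that the associated graded of $Z(\lieg)$ is contained in $\fk[e_{jr}]$'s symbols, and since the $e_{jr}$ themselves are genuinely central in $U(\lieg)$ (immediate, as $C(\liel)$ is central in $\liel$ so $e_j \otimes t^r$ commutes with all $e_k \otimes t^s$, using that $\lieg$ is a direct sum of abelian-by-whatever and the bracket $[e_j \otimes t^r, x \otimes t^s] = [e_j,x]\otimes t^{r+s} = 0$), a downward induction shows every central element lies in $\fk[e_{jr} : j \in J, r \in \ii]$. For the ``in particular'' clause: when $C(\liel) \neq \{0\}$, I must show the $e_{jr}$ are algebraically independent. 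This follows because in $S(\lieg)$ the elements $e_{jr}$ (as $j,r$ vary, with $\{e_j\}$ linearly independent in $\liel$) are part of a basis of $\lieg$, hence algebraically independent coordinate functions, and the PBW/symbol map is injective enough to transfer this independence to $U(\lieg)$ — a monomial relation among the $e_{jr}$ in $U(\lieg)$ would give one among their symbols in $S(\lieg)$.

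The main obstacle I anticipate is the precise computation of $\on{ad}(\lieg)$-invariants of $S(\lieg)$ (equivalently, of $Z(\lieg)$ at the associated-graded level) when $\liel$ is a general Lie (super)algebra rather than semisimple: one needs to rule out ``mixed'' invariants that combine different graded pieces $\liel_r$ — for instance Casimir-type elements built from a nondegenerate invariant form, if one exists, could a priori contribute. The resolution should be that for the current/loop algebra, the bracket $[\liel_r, \liel_s] \subseteq \liel_{r+s}$ with all structure constants inherited from $\liel$, and the adjoint action of the \emph{positive-degree} (or negative-degree) elements $\liel_s$ with $s \neq 0$ moves a would-be invariant out of its homogeneous degree unless it is annihilated — carefully tracking this, combined with the hypothesis $\fk = \overline{\fk}$ and $\on{ch}(\fk) = 0$ (so that $\liel$ decomposes as needed and $S(\liel)^{\liel}$ is well understood), pins down the invariants to exactly the symmetric powers of $C(\liel)_r$. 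In the super case the same scheme applies with the usual sign conventions in the bracket and the PBW theorem for Lie superalgebras; the parity bookkeeping in the ``algebraically independent'' claim is the only place requiring extra care, since odd central elements square to elements of the symmetric algebra and one must state freeness in the appropriate $\fz/2$-graded-commutative sense.
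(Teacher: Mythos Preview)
Your overall architecture matches the paper's: pass to the associated graded $S(\lieg)$, show $S(\lieg)^{\lieg}=\fk[e_{jr}\mid j\in J,\,r\in\ii]$, and then lift back using that the $e_{jr}$ are genuinely central in $U(\lieg)$. The algebraic independence argument and the super bookkeeping are fine (the paper simply takes the center $C(\liel)$ to be purely even, so your worry about odd central generators does not arise).

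The genuine gap is exactly the step you yourself flag as the main obstacle: the computation of $S(\lieg)^{\lieg}$. Your proposed route---reduce to $t$-homogeneous invariants, then invoke semisimple decompositions of $\liel$ or knowledge of $S(\liel)^{\liel}$---does not work and is not what the paper does. For a general $\liel$ the invariant ring $S(\liel)^{\liel}$ can be large (Casimirs, etc.), and nothing in your sketch explains why such elements fail to propagate to $S(\lieg)^{\lieg}$. Your remark that ``the adjoint action of $\liel_s$ moves a would-be invariant out of its homogeneous degree unless it is annihilated'' is a tautology once you have already restricted to invariants; the content must come from analysing \emph{what} the equation $\on{ad}(e_{is})(f)=0$ forces on the coefficients of $f$.

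The paper's key trick, which you are missing, is the following. Write $f\in S(\lieg)^{\lieg}$ over the basis $\{\prod e_{ir}^{\omega(i,r)}\}$ with coefficients in $\fk[e_{jr}\mid j\in J]$. If some $\omega$ with $\omega(j,r)>0$ for $j\notin J$ occurs, pick $i$ with $[e_i,e_j]\neq 0$ (possible precisely because $e_j\notin C(\liel)$), and apply $\on{ad}(e_{is})$ for an integer $s$ strictly larger than every $x$-degree appearing in $f$. The resulting expression involves new variables $e_{k,r+s}$ whose second index exceeds anything already present, so no cancellation between different monomials is possible; reading off the coefficient of a single monomial gives $a_{ijk}\,\omega(j,r)\,c_\omega=0$, hence $c_\omega=0$ in characteristic~$0$. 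This argument uses nothing about $\liel$ beyond the definition of its center---no algebraic closure, no semisimplicity, no information about $S(\liel)^{\liel}$---and it is precisely this ``choose $s$ large enough to create fresh variables'' device that rules out the Casimir-type mixed invariants you were worried about.
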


\begin{thmIntro}\label{1.2}
If $\liel$ is a Lie (super)algebra over $\fk$ with $\fk=\overline{\fk}$ and $\on{ch}(\fk)=p>0$, then the center $Z(\lieg)  $ is generated by $\{ e_{jr}\mid j\in J, r\in \ii \}$ and $Z_p(\lieg). $  
$\\ $
(1) If $\liel$ is a Lie algebra, then $Z(\lieg)$ is freely generated by 
\begin{equation}
	\{ e_{jr}\mid j\in J, r\in \ii \}\cup \{ e_{ir}^p-e_{ir}^{[p]}\mid i\in I\xg J, r\in \ii \};
 \nonumber 
 \end{equation}
 (2) If $\liel$ is a Lie superalgebra, then $Z(\lieg)$ is freely generated by 
 \begin{equation}
	\{ e_{jr}\mid j\in J, r\in \ii \}\cup \{ e_{ir}^p-e_{ir}^{[p]}\mid i\in I_0\xg J, r\in \ii \}.
 \nonumber 
 \end{equation}
\end{thmIntro}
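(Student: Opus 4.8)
\noindent The plan is to prove the two inclusions $\mathcal Z\se Z(\lieg)$ and $Z(\lieg)\se\mathcal Z$, where $\mathcal Z\se U(\lieg)$ denotes the subalgebra generated by $\{e_{jr}:j\in J,\ r\in\ii\}$ together with $Z_p(\lieg)$, and simultaneously to identify $\mathcal Z$ with the asserted polynomial algebra. The first inclusion and the identification of $\mathcal Z$ are routine; the reverse inclusion will be reduced to a statement about the centres of reduced enveloping algebras, which in turn is handled by the argument behind Theorem~\ref{1.1}. Throughout write $Z_p:=Z_p(\lieg)$.

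\textbf{Step 1 (easy inclusion and the free generators).} Each $e_j$ lies in $C(\liel)$, so $e_{jr}=e_j\ten x^{r}$ is central in $\lieg$, hence in $U(\lieg)$; and every $e^{p}-e^{[p]}$ is central by the usual restricted computation (in characteristic $p$, $\on{ad}(e^{p})=\on{ad}(e)^{p}=\on{ad}(e^{[p]})$ on $\lieg$). Hence $Z_p\se Z(\lieg)$ and $\mathcal Z\se Z(\lieg)$. Since $\on{ad}(e_j^{[p]})=(\on{ad}e_j)^{p}=0$ we get $e_j^{[p]}\in C(\liel)$, so $e_{jr}^{[p]}=e_j^{[p]}\ten x^{pr}\in\operatorname{span}_{\fk}\{e_{k,pr}:k\in J\}$ and therefore $e_{jr}^{p}-e_{jr}^{[p]}\in\fk[e_{kr}:k\in J,\ r\in\ii]$. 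Consequently $\mathcal Z$ is already generated by $\{e_{jr}:j\in J\}\cup\{e_{ir}^{p}-e_{ir}^{[p]}:i\in I\xg J\}$ (resp. with $I_0\xg J$ in the super case), and these generators are algebraically independent in $U(\lieg)$: in the associated graded (super)symmetric algebra $\on{gr}U(\lieg)=S(\lieg)$ their symbols are the pairwise distinct monomials $\xi_{jr}$ and $\xi_{ir}^{p}$, which are algebraically independent there. Thus $\mathcal Z$ is a polynomial algebra on these generators, and the ``freely generated'' statements follow as soon as $Z(\lieg)\se\mathcal Z$ is established.

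\textbf{Step 2 (freeness over $Z_p$ and reduction to $p$-characters).} Fix a total order on the basis $\{e_{ir}\}$ of $\lieg$. Rewriting $e_{ir}^{p}=(e_{ir}^{p}-e_{ir}^{[p]})+e_{ir}^{[p]}$ and inducting on PBW degree shows that $U(\lieg)$ is a free $Z_p$-module on the set $\mathcal M$ of restricted PBW monomials (even generators with exponent $<p$, odd generators with exponent $\le 1$), freeness being visible from leading symbols in $S(\lieg)$, and that $Z_p=\fk[\,e_{ir}^{p}-e_{ir}^{[p]}\,]$ ($i\in I$, resp. $i\in I_0$) is a polynomial algebra. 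Since $Z_p$ is central, $\on{ad}(\lieg)$ acts $Z_p$-linearly, so $Z(\lieg)=U(\lieg)^{\lieg}$ is a $Z_p$-submodule. For a scalar family $\chi=(\chi_{ir})$ set $\mf m_\chi=(\,e_{ir}^{p}-e_{ir}^{[p]}-\chi_{ir}\,)\se Z_p$ and $u_\chi(\lieg):=U(\lieg)/\mf m_\chi U(\lieg)$; by freeness $\{m\bmod\mf m_\chi\}_{m\in\mathcal M}$ is a $\fk$-basis of $u_\chi(\lieg)$, and the image of $\fk[e_{jr}:j\in J,\ r\in\ii]$ is the span of the restricted monomials in the $e_{jr}$, $j\in J$. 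Granting Step~3, namely $Z(u_\chi(\lieg))=\fk[\,\overline{e_{jr}}:j\in J,\ r\in\ii\,]$ for every $\chi$, the argument concludes: given $z=\sum_{m\in\mathcal M}f_m m\in Z(\lieg)$ with $f_m\in Z_p$ and given any $m$ that involves some $e_{ir}$ with $i\notin J$, the image of $z$ in $u_\chi(\lieg)$ is a combination of restricted $\overline{e_{jr}}$-monomials only, so $f_m\bmod\mf m_\chi=0$ for all $\chi$; as $\fk=\overline{\fk}$ and $Z_p$ is a polynomial ring, $\bigcap_\chi\mf m_\chi=0$, whence $f_m=0$. Thus $z$ is supported on restricted monomials in the $e_{jr}$, $j\in J$, and substituting $e_{jr}^{p}=(e_{jr}^{p}-e_{jr}^{[p]})+e_{jr}^{[p]}$ with $e_{jr}^{[p]}$ linear in the $e_{k,pr}$ (Step~1) when the $f_m$ are written out exhibits $z$ as a polynomial in $\{e_{jr}:j\in J\}$ and $\{e_{ir}^{p}-e_{ir}^{[p]}:i\in I\xg J\}$ (resp. $I_0\xg J$), i.e. $z\in\mathcal Z$.

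\textbf{Step 3 (the centre of $u_\chi(\lieg)$ --- the main obstacle).} It remains to show that $u_\chi(\lieg)$ carries no central elements beyond $\fk[\,\overline{e_{jr}}:j\in J,\ r\in\ii\,]$; heuristically, the current/loop (super)algebra is ``too large'' to support reduced Casimir-type invariants, in contrast to $u_\chi(\liel)$ itself. I would prove this by imitating the proof of Theorem~\ref{1.1}: take $\bar z\in Z(u_\chi(\lieg))$, put it in restricted PBW normal form, and look at the largest monomial occurring; if it involves some $e_{ir}$ with $i\notin J$, commute $\bar z$ with a carefully chosen $e_{ks}$ and track the two highest PBW degrees --- invariance of the leading symbol constrains it, and the subleading terms then produce a nonzero correction, a nonzero multiple of some basis vector $\overline{e_{k's'}}$, reflecting that (unlike over $\liel$) the multiplicities entering such brackets along the current/loop variable do not all cancel. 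Subtracting an element of $\fk[\overline{e_{jr}}:j\in J]$ with the same leading monomial and inducting on the PBW order then forces $\bar z\in\fk[\,\overline{e_{jr}}:j\in J,\ r\in\ii\,]$. The hard part is precisely this verification --- that no sporadic invariants survive --- carried out uniformly for the current algebra ($\ii=\fn$) and the loop algebra ($\ii=\fz$); in the super case one must additionally carry the odd root vectors through the normal-form reductions and record that only the even generators $e_{ir}$ ($i\in I_0$) enter $Z_p$, which is the source of the difference between parts (1) and (2).
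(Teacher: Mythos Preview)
Your Steps 1 and 2 are correct and self-contained: the freeness of $U(\lieg)$ over $Z_p$ on restricted PBW monomials, and the Nullstellensatz argument for $\bigcap_\chi\mf m_\chi=0$, are valid even though $Z_p$ has infinitely many generators (any nonzero element involves only finitely many of them). The route, however, is genuinely different from the paper's, and Step~3 is not a proof but an outline whose mechanism is unclear.

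The paper bypasses $p$-characters entirely. It works once with the filtration on $U(\lieg)$ (placing $e_{ir}$ in degree $r+1$) and its associated graded $S(\lieg)$, proving directly that $S(\lieg)^{\lieg}$ is the free polynomial algebra on $\{\xi_{jr}:j\in J\}\cup\{\xi_{ir}^{p}:i\in I\xg J\}$ (resp.\ $I_0\xg J$). The key is a single ``large $s$'' trick: if $f\in S(\lieg)^{\lieg}$ has a monomial involving $e_{jr}$ with $j\notin J$, choose $i$ with $[e_i,e_j]\ne 0$ and $s>\deg_x f$; then $\on{ad}(e_{is})f$ contains a term with the fresh factor $e_{k,r+s}$, whose coefficient $a_{ijk}c_\omega\omega(j,r)$ must vanish, forcing $c_\omega=0$ since $0<\omega(j,r)<p$. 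Because your free generators of $Z(\lieg)$ have symbols $\xi_{jr}$ and $\xi_{ir}^{p}$, one gets $\on{gr}Z=\on{gr}Z(\lieg)=S(\lieg)^{\lieg}$ and hence $Z=Z(\lieg)$. This is strictly simpler than your plan: one invariant computation in a commutative (super)algebra, rather than a family of noncommutative centre computations indexed by $\chi$.

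Your Step~3 has a gap. The phrase ``invariance of the leading symbol constrains it, and the subleading terms then produce a nonzero correction'' does not describe a working argument: if $\bar z$ is central then $[e_{ks},\bar z]=0$ identically, so there is no ``nonzero correction'' to exploit; what you actually need is that the \emph{leading symbol} of $\bar z$ is $\lieg$-invariant in $\on{gr}u_\chi(\lieg)\cong S(\lieg)/(\xi_{ir}^{p})$, and that the invariants of this restricted symmetric algebra are just the images of the $\xi_{jr}$, $j\in J$. Proving that requires exactly the large-$s$ computation above, now carried out modulo $p$-th powers. So the honest completion of Step~3 reproduces the paper's key lemma inside each $u_\chi$; nothing is gained by the reduction to $p$-characters, and the direct noncommutative calculation you gesture at would be both harder and redundant.
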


See subsection \ref{2CL} for a description of the relevant symbols in Theorem \ref{1.1} and Theorem \ref{1.2}.

The proofs of both Theorem \ref{1.1} and Theorem \ref{1.2} depend on (\cite{BT}) and (\cite{CH}). In Theorem \ref{1.2}, item (1) (resp. item (2)) generalizes the results in (\cite{BT}) (resp. (\cite{CH})) (see Remark \ref{4.1.04} and \ref{4.2.10}). The Harish-Chandra center of $U(\lieg)$ is trivial if $\lieg$ is a semisimple Lie (super)algebra (see Proposition \ref{3.1.3}, \ref{3.2.3}, \ref{4.1.3} and \ref{4.2.3}).

This paper is organized as follows.
Section \ref{2} introduces some necessary background and specifies the notation used. Sections \ref{3} and \ref{4} provide detailed discussions on the generators of $Z(\lieg)$, where Section \ref{3} focuses on the case of zero characteristic and Section \ref{4} considers the case of prime characteristic. Both Sections \ref{3} and \ref{4} give applications of the main theorem to semisimple Lie (super)algebra.

Throughout this paper, $\fk$ denotes an algebraically closed field with characteristic $p\geq 0$.

\section{Notations and preliminaries}\label{2}
\subsection{The current (super)algebra and loop (super)algebra}\label{2CL}

\begin{definition}
\label{2.0}
(\cite[7.1]{Ka}\footnote{\cite[7.1]{Ka} gives only the definition of the loop algebra, and the definition of the current algebra is similar.})
For a Lie algebra $\liel$ over $\fk$, $\liel[x]:=\liel\ten \fk[x]$ (resp. $\liel(x):=\liel\ten \fk[x,x^{-1}]$) is a Lie algebra with Lie bracket $[e\ten x^r, f\ten x^s]= [e,f]\ten x^{r+s}$ (resp. $[e\ten x^u, f\ten x^v]= [e,f]\ten x^{u+v}$) for each $e,f\in \liel $ and $r,s\in \fn$ (resp. $e,f\in \liel $, $u,v\in \fz$), which is called the {\em current algebra} (resp. {\em loop algebra}) of $\liel$. 
\end{definition}

 \begin{definition}\label{2.01}(\cite[2.1]{CH}\footnote{\cite[2.1]{CH} gives only the definition of the current superalgebra, and the definition of the loop superalgebra is similar.})
For a Lie superalgebra $\liel$ over $\fk$, $\liel[x]:=\liel\ten \fk[x]$ (resp. $\liel(x):=\liel\ten \fk[x,x^{-1}]$) is a Lie superalgebra with the $\fz_2$-grading is defined by $|e\ten x^r|=|e|$ (resp. $|e\ten x^u|=|e|$) and the Lie bracket is given by $[e\ten x^r, f\ten x^s]= [e,f]\ten x^{r+s}$ (resp. $[e\ten x^u, f\ten x^v]= [e,f]\ten x^{u+v}$) for each $e,f\in \liel $ and $r,s\in \fn$ (resp. $e,f\in \liel $, $u,v\in \fz$), which is called the {\em current superalgebra} (resp. {\em loop superalgebra}) of $\liel$. 
\end{definition}
Entail the whole paper without special mention. Let $\lieg=\liel[x]$ or $\liel(x)$, where $\liel$ is a Lie algebra or Lie superalgebra, and denote $ \ii=\fn $ if $\lieg= \liel[x]$ or $ \ii=\fz $ if $\lieg= \liel(x)$. Recall that $C(\liel)$ is the center of $\liel$ while $Z(\liel)$ is the center of $U(\liel)$.

Let $\{e_i\mid i\in I\} $ be a basis of $\liel$ such that $\{e_j\mid j\in J\} $ is a basis of $C(\liel)$ where $J\subseteq I.$ In particular, $C(\liel)$ has trivial center if and only if $J=\varnothing$.
If $\liel$ is a Lie superalgebra, since the center $C(\liel)$ consists of only even elements, we can assume that $\{e_k\mid k\in I_0\}$ is a basis of $\liel_{\overline{0}}$ and $\{e_l\mid l\in I_1\}$ is a basis of $\liel_{\overline{1}}$ such that $I=I_0\cup I_1$ and $J\subseteq I_0$ (see \cite[2.2.2]{CW}). Denote $e_{ir}:=e_i\ten x^r \in \lieg$ with $i\in I, r\in \ii $.

\begin{lemma}\label{2.3}
    For a Lie algebra $\liel$ over $\fk$, we have that $C(\liel)\ten \fk[x]\subseteq Z(\lieg)$ if $\lieg=\liel[x]$ and $C(\liel)\ten \fk[x,x^{-1}]\subseteq Z(\lieg)$ if $\lieg=\liel(x).$
\end{lemma}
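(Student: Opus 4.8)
The plan is to show that each generator $e_{jr} = e_j \otimes x^r$ with $j \in J$ and $r \in \ii$ lies in $Z(\lieg)$, and since these elements generate the subalgebra $C(\liel)\otimes\fk[x]$ (resp. $C(\liel)\otimes\fk[x,x^{-1}]$) of $U(\lieg)$, the claimed inclusion follows. Because $Z(\lieg)$ is a subalgebra of $U(\lieg)$, it suffices to check that each $e_{jr}$ commutes with a set of algebra generators of $U(\lieg)$, namely with all $e_{is} = e_i\otimes x^s$ for $i\in I$, $s\in\ii$ (these generate $U(\lieg)$ since the $e_i$ generate $\liel$ and hence the $e_i\otimes x^s$ span $\lieg$).

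First I would compute the bracket in $\lieg$: for $j\in J$, $i\in I$, and $r,s\in\ii$, the defining bracket of the current (resp. loop) (super)algebra gives $[e_j\otimes x^r,\, e_i\otimes x^s] = [e_j,e_i]\otimes x^{r+s}$. Since $e_j\in C(\liel)$, we have $[e_j,e_i]=0$ in $\liel$, so $[e_{jr},e_{is}]=0$ in $\lieg$. Under the canonical map $\iota:\lieg\to U(\lieg)$ this bracket relation becomes the commutator relation $e_{jr}e_{is} - (-1)^{|e_{jr}||e_{is}|}e_{is}e_{jr} = 0$ in $U(\lieg)$ (with the sign absent in the Lie algebra case); here I use that $C(\liel)$, and hence each $e_j$ and each $e_{jr}$, consists of even elements in the superalgebra case, so no sign subtleties arise. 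Thus $e_{jr}$ super-commutes with every algebra generator of $U(\lieg)$.

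To conclude, I would invoke the standard fact that an element of an associative (super)algebra that (super)commutes with a generating set (super)commutes with the whole algebra: writing any $u\in U(\lieg)$ as a sum of products of the $e_{is}$, one pushes $e_{jr}$ past each factor using the relation above, collecting signs, and since $e_{jr}$ is even the total sign is trivial and one obtains $e_{jr}u = u\,e_{jr}$. Hence $e_{jr}\in Z(\lieg)$ for all $j\in J$, $r\in\ii$, and since $Z(\lieg)$ is closed under multiplication and $\fk$-linear combinations it contains the subalgebra generated by these elements, which is exactly $C(\liel)\otimes\fk[x]$ when $\lieg=\liel[x]$ and $C(\liel)\otimes\fk[x,x^{-1}]$ when $\lieg=\liel(x)$.

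There is essentially no obstacle here; the only point requiring a word of care is the bookkeeping of the $\fz_2$-grading, but this is harmless precisely because the center $C(\liel)$ is purely even by the conventions fixed in Subsection \ref{2CL}, so the elements $e_{jr}$ are even and genuinely commute (not merely super-commute) with everything. The statement is a direct unwinding of the definitions, and I would present it as such.
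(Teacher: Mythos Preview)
Your proposal is correct and follows essentially the same approach as the paper: both arguments observe that $[e_{jr},e_{is}]=[e_j,e_i]\otimes x^{r+s}=0$ for $e_j\in C(\liel)$, from which the inclusion into $Z(\lieg)$ is immediate. The paper states this in a single line, whereas you spell out the passage from ``commutes with generators'' to ``lies in the center''; note also that Lemma~\ref{2.3} concerns only the Lie algebra case (the super version is Lemma~\ref{2.5}), so your discussion of $\fz_2$-grading and signs is unnecessary here, though harmless.
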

\begin{proof}
     Suppose $\lieg=\liel[x]$. The Lie bracket $[e\ten x^r, f\ten x^s]= [e,f]\ten x^{r+s}$ for each $e,f\in \liel $ and $r,s\in \fn$ implies that $C(\liel)\ten \fk[x]\subseteq Z(\lieg)$.

     The arguments are similar if $\lieg=\liel(x)$ which will be omitted here.
\end{proof}

\begin{lemma}\label{2.4}
     For a Lie algebra $\liel$ over $\fk$, we have that $\{ e_{ir}\mid i\in I, r\in \ii \}$ is a basis of $\lieg$.
\end{lemma}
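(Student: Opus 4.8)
The plan is to reduce Lemma~\ref{2.4} to the standard fact that a tensor product of $\fk$-vector spaces has, as a basis, the set of pairwise tensor products of bases of the two factors; the Lie bracket plays no role here, only the underlying $\fk$-vector space structure of $\lieg$ is used.

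Write $R = \fk[x]$ if $\lieg = \liel[x]$ and $R = \fk[x,x^{-1}]$ if $\lieg = \liel(x)$, so that $\lieg = \liel \ten_\fk R$ and $\{x^r \mid r \in \ii\}$ is the evident $\fk$-basis of $R$ in both cases. First I would decompose $R = \bds_{r \in \ii} \fk x^r$, which gives
\begin{equation}
\lieg = \liel \ten_\fk R = \liel \ten_\fk \Big( \bds_{r \in \ii} \fk x^r \Big) = \bds_{r \in \ii} \big( \liel \ten_\fk \fk x^r \big). \nonumber
\end{equation}
For each fixed $r$, the map $e \mapsto e \ten x^r$ is a $\fk$-linear isomorphism $\liel \ra \liel \ten_\fk \fk x^r$, so $\{e_{ir} \mid i \in I\}$ is a $\fk$-basis of the $r$-th summand. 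Since a union of bases of the summands of a direct sum is a basis of the direct sum, $\{e_{ir} \mid i \in I,\, r \in \ii\}$ is a $\fk$-basis of $\lieg$, which is the assertion.

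If one prefers to argue by hand rather than via the direct-sum decomposition: spanning follows because any element of $\lieg$ is a finite sum $\sum_k v_k \ten f_k$ with $v_k \in \liel$, $f_k \in R$, and expanding each $v_k$ in $\{e_i\}$ and each $f_k$ in $\{x^r\}$, then using bilinearity of $\ten$, writes it as a finite $\fk$-combination of the $e_{ir}$; linear independence follows because a finitely supported relation $\sum_{i,r} c_{ir}\, e_{ir} = 0$ regroups as $\sum_r \big( \sum_i c_{ir} e_i \big) \ten x^r = 0$, forcing each $\sum_i c_{ir} e_i = 0$ by the direct-sum decomposition above and hence each $c_{ir} = 0$ by independence of $\{e_i\}$. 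There is no real obstacle here; the only point deserving a moment's care is confirming that $\{x^r \mid r \in \ii\}$ is a basis of $R$ in both the polynomial ($\ii = \fn$) and Laurent-polynomial ($\ii = \fz$) cases, which is immediate.
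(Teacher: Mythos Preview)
Your proposal is correct, and your ``by hand'' argument is essentially identical to the paper's own proof: the paper verifies spanning from the fact that $\{e_i\}$ spans $\liel$, then checks linear independence by regrouping a relation $\sum b_{ir} e_{ir}=0$ as $\sum_i b_{ir} e_i=0$ for each fixed $r$ and invoking the independence of $\{e_i\}$. Your first formulation via the direct-sum decomposition $R=\bigoplus_{r\in\ii}\fk x^r$ is just a slightly more conceptual repackaging of the same step.
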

\begin{proof}
   It is easy to know that $\lieg$ is $\fk$-spanned by $\{ e_{ir}\mid i\in I, r\in \ii \}$ since $\liel$ is $\fk$-spanned by $\{e_i\mid i\in I\}$. Let \begin{equation}\label{2.2}
       \sum_{(i,r)\in I\times \ii} b_{ir} e_{ir}=0,
   \end{equation}
    where $b_{ir}\in \fk$ and $b_{ir}=0$ for all $(i,r)\in I\times \ii$ except finite many.
    (\ref{2.2}) implies that $$\sum_{i\in I} b_{ir} e_{i}=0,$$ for each $r\in \ii$. As $\{e_i\mid i\in I\} $ is a basis of $\liel$, we conclude that $b_{ir}=0$ for all $(i,r)\in I\times \ii$ which implies that $\{ e_{ir}\mid i\in I, r\in \ii \}$ is a basis of $\lieg$.
\end{proof}

 The proof of the following conclusion is omitted here, which is analogous to Lie algebra.

\begin{lemma}\label{2.5}
    For a Lie superalgebra $\liel$ over $\fk$, we have that $C(\liel)\ten \fk[x]\subseteq Z(\lieg)$ if $\lieg=\liel[x]$ and $C(\liel)\ten \fk[x,x^{-1}]\subseteq Z(\lieg)$ if $\lieg=\liel(x).$
\end{lemma}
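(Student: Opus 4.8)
The plan is to mimic the proof of Lemma~\ref{2.3}, attending only to the parity bookkeeping that the super setting introduces. First I would recall that, by the conventions fixed above, the center $C(\liel)$ of the Lie superalgebra $\liel$ is spanned by even elements, so every $e\in C(\liel)$ is even; consequently $e\ten x^r$ is an even element of $\lieg$ for each $r\in\ii$, since $|e\ten x^r|=|e|=\overline{0}$. In particular the super-bracket $[e\ten x^r,-]$ on $\lieg$ coincides with the ordinary commutator, so no sign subtleties arise when testing centrality.

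Next I would check that $e\ten x^r$ brackets to zero against every element of $\lieg$. It is enough to verify this on a homogeneous basis $\{f\ten x^s\}$ of $\lieg$: by the defining bracket of the current (super)algebra (Definition~\ref{2.01}), $[e\ten x^r,f\ten x^s]=[e,f]\ten x^{r+s}$, and $[e,f]=0$ in $\liel$ because $e\in C(\liel)$. Extending $\fk$-bilinearly gives $[e\ten x^r,u]=0$ for all $u\in\lieg$.

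Finally I would upgrade this to centrality in $U(\lieg)$. Since $U(\lieg)$ is generated as an algebra by (the image of) $\lieg$, and since for the even element $z=e\ten x^r$ the operator $[z,-]=z(-)-(-)z$ is a derivation of $U(\lieg)$, the vanishing of $[z,-]$ on the generating set $\lieg$ forces $[z,u]=0$ for all $u\in U(\lieg)$; hence $z\in Z(\lieg)$. Letting $r$ range over $\ii$ yields $C(\liel)\ten\fk[x]\subseteq Z(\lieg)$ when $\lieg=\liel[x]$, and the same computation gives $C(\liel)\ten\fk[x,x^{-1}]\subseteq Z(\lieg)$ when $\lieg=\liel(x)$. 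There is no real obstacle here; the one point that needs care --- and the reason this is stated separately from Lemma~\ref{2.3} --- is the observation that the centrality together with the evenness of $e$ collapses the super-commutator to the plain commutator, so the Lie-algebra argument transfers essentially verbatim.
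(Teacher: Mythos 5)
Your proposal is correct and follows exactly the route the paper intends: the paper omits this proof as ``analogous to Lie algebra,'' and your argument is precisely that analogue, with the one relevant super-detail (that $C(\liel)$ is even, so the super-commutator with $e\ten x^r$ reduces to the ordinary commutator and the derivation argument from Lemma~\ref{2.3} applies verbatim) made explicit.
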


\begin{lemma}\label{2.6}
     For a Lie superalgebra $\liel$ over $\fk$, we have $\{ e_{ir}\mid i\in I, r\in \ii \}$ is a basis of $\lieg$. In particular, $\{ e_{kr}\mid i\in I_0, r\in \ii \}$ (resp. $\{ e_{lr}\mid i\in I_1, r\in \ii \}$) is a basis of $\lieg_{\overline{0}}$ (resp. $\lieg_{\overline{1}}$).
\end{lemma}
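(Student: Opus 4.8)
The plan is to copy the proof of Lemma \ref{2.4} almost verbatim, since exhibiting a basis is a statement about the underlying $\fk$-vector space only and uses neither the Lie bracket nor the $\fz_2$-grading, and then to add one short paragraph handling the refinement into even and odd parts. First I would note that $\liel$ is $\fk$-spanned by $\{e_i\mid i\in I\}$ and $\fk[x]$ (resp. $\fk[x,x^{-1}]$) is $\fk$-spanned by $\{x^r\mid r\in\ii\}$, so $\lieg=\liel\ten\fk[x]$ (resp. $\liel\ten\fk[x,x^{-1}]$) is $\fk$-spanned by $\{e_{ir}=e_i\ten x^r\mid i\in I,\ r\in\ii\}$.

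For linear independence I would take a relation $\sum_{(i,r)\in I\times\ii} b_{ir}e_{ir}=0$ with $b_{ir}\in\fk$ and $b_{ir}=0$ for all but finitely many $(i,r)$, regroup it as $\sum_{r\in\ii}\big(\sum_{i\in I}b_{ir}e_i\big)\ten x^r=0$, and use that the monomials $x^r$ form a $\fk$-basis of $\fk[x]$ (resp. $\fk[x,x^{-1}]$) to deduce $\sum_{i\in I}b_{ir}e_i=0$ in $\liel$ for every $r\in\ii$; since $\{e_i\mid i\in I\}$ is a basis of $\liel$, every $b_{ir}$ vanishes. Hence $\{e_{ir}\mid i\in I,\ r\in\ii\}$ is a basis of $\lieg$.

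For the \emph{in particular} assertion I would invoke the grading convention of Definition \ref{2.01}, namely $|e_{ir}|=|e_i|$. Thus each $e_{ir}$ with $i\in I_0$ lies in $\lieg_{\overline{0}}$ and each $e_{ir}$ with $i\in I_1$ lies in $\lieg_{\overline{1}}$; together with $I=I_0\cup I_1$ and the basis property just established, the $\fz_2$-decomposition of $\liel$ transfers term by term to $\lieg$, so $\{e_{kr}\mid k\in I_0,\ r\in\ii\}$ spans and is linearly independent in $\lieg_{\overline{0}}$ and likewise $\{e_{lr}\mid l\in I_1,\ r\in\ii\}$ in $\lieg_{\overline{1}}$, which are therefore bases of $\lieg_{\overline{0}}$ and $\lieg_{\overline{1}}$ respectively.

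There is essentially no obstacle in this lemma; the argument is routine, and the only point requiring care is keeping the bookkeeping of the index sets $I_0$ and $I_1$ consistent with Definition \ref{2.01} so that the even/odd decomposition of $\liel$ passes to $\lieg$ without ambiguity.
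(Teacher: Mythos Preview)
Your proposal is correct and matches the paper's approach: the paper omits the proof of this lemma, stating only that it is analogous to the Lie algebra case (Lemma~\ref{2.4}), which is precisely what you carry out. The additional paragraph handling the $\fz_2$-grading via Definition~\ref{2.01} is the natural and expected refinement.
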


Suppose $\{a_{ijk}\in \fk\mid i,j,k\in I\}$ are the structure coefficients of $\liel$. Namely, $$[e_i,e_j]=\sum_{k\in I} a_{ijk} e_k,$$ for all $i,j\in I.$
For each $i,j\in I,$ it follows that $a_{ijk}=0$ for all $k\in I$ except finite many. In particular, if $i\in J$ or $j\in J$, then $a_{ijk}=0$ for all $k\in I.$ Moreover, $$ [e_{ir}, e_{js}]=\sum_{k\in I}a_{ijk}e_{k,r+s} ,$$ for all $i,j\in I$ and $r,s\in \ii$. We can know that $\{a_{ijk}\in \fk\mid i,j,k\in I\}$ are also the structure coefficients of $\lieg$ since Lemma \ref{2.4} and Lemma \ref{2.6}. Define $\on{deg}_x(e_{i_1r_1}\cdots e_{i_tr_t})=r_1+\cdots +r_t$ the $x$-degree of the monomial $e_{i_1r_1}\cdots e_{i_tr_t}\in S(\lieg).$

In prime characteristic, the current (super)algebra, as well as loop (super)algebra $\lieg $ admits a natural structure of restricted Lie (super)algebra, with $p$-th power map $x\mt x^{[p]}$ defined by $(e_i\ten x^r)^{[p]}=e_i^{[p]}\ten x^{rp}$ for all $i\in I$ and $r\in \ii$. Consequently, if $\liel$ is a Lie algebra, $U(\lieg)$ has a large $p$-center generated by the elements
$$\left\{ e_{ir}^p-e_{ir}^{[p]}\mid i\in I, r\in \ii \right\}.$$ 
If $\liel$ is a Lie superalgebra, $U(\lieg)$ also has a large $p$-center generated by the elements
$$\left\{ e_{ir}^p-e_{ir}^{[p]}\mid i\in I_0, r\in \ii \right\}.$$ 

\subsection{Symmetric invariants}
Let $S(\lieg)$ be the symmetric algebra (resp. supersymmetric algebra) if $\liel$ is a Lie algebra (resp. Lie superalgebra). There is a filtration $$U(\lieg)=\bigcup_{r\in \ii} \on{F}_rU(\lieg)$$ of the enveloping algebra $U(\lieg)$ which is defined by placing $e_{ir}$ in degree $r+1,$ that is, $\on{F}_rU(\lieg)$ is the span of all monomials of the form $e_{i_1r_1}\cdots e_{i_sr_s}$ with total degree $(r_1+1)+\cdots+(r_s+1)\leq r.$ The associated graded algebra $\on{gr}U(\lieg)$ is isomorphic (both as graded algebra and as a graded $\lieg$-module) to $S(\lieg).$ If $\on{ch}(\fk)>0,$ the degree of $e_{ir}^p$ (resp. $ e_i^{[p]} \ten x^{rp} $) is $(r+1)p$ (resp. $rp+1$), and hence $\on{gr}(e_{ir}^p-e_{ir}^{[p]})=e_{ir}^p.$ It follows that
\begin{equation}\label{2.1}
 \on{gr}Z(\lieg)\subseteq S(\lieg)^{\lieg}.
\end{equation}
\begin{remark}
(\ref{2.1}) is independent of the characteristic of $\fk$.
\end{remark}

\section{In zero characteristic}\label{3}

In this section, the characteristic of $\fk$ is zero.

\subsection{Lie algebra}
\begin{lemma}\label{3.1.1}
	If $\liel$ is a Lie algebra over $\fk$, then the invariant algebra $S(\lieg)^{\lieg}$ is equal to $\fk[ e_{jr}\mid j\in J, r\in\ii ]$. In particular, if $C(\liel)\neq \left \{ 0 \right \}$, $S(\lieg)^{\lieg}$ is freely generated by $\{ e_{jr}\mid j\in J, r\in \ii \}.$
\end{lemma}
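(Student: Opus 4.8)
The plan is to compute $S(\lieg)^{\lieg}$ directly by analyzing the action of $\lieg$ on the polynomial algebra $S(\lieg) = \fk[e_{ir} \mid i \in I, r \in \ii]$. Since $\lieg$ acts on $S(\lieg)$ by derivations, an element $f \in S(\lieg)$ lies in $S(\lieg)^{\lieg}$ if and only if $\on{ad}(e_{js}) \cdot f = 0$ for every generator $e_{js}$ of $\lieg$, i.e. $j \in I$, $s \in \ii$. First I would record the explicit derivation formula: for a generator $e_{ir}$ of $S(\lieg)$ and $e_{js} \in \lieg$, we have $e_{js} \cdot e_{ir} = [e_{js}, e_{ir}] = \sum_{k \in I} a_{jik} e_{k,s+r}$, extended by the Leibniz rule. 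Containment of $\fk[e_{jr} \mid j \in J, r \in \ii]$ in $S(\lieg)^{\lieg}$ is immediate: when $j \in J$ the structure constants $a_{jik}$ all vanish (as noted in the excerpt, $e_j$ is central in $\liel$), so each $e_{jr}$ is killed by every $\on{ad}(e_{ks})$, hence so is any polynomial in the $e_{jr}$.

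For the reverse inclusion, I would pick a monomial ordering / grading argument to peel off the "non-central" variables. Split the generating set of $S(\lieg)$ into the central variables $\{e_{jr} : j \in J, r \in \ii\}$ and the rest $\{e_{ir} : i \in I \setminus J, r \in \ii\}$, so $S(\lieg) = \fk[e_{jr}] \otimes \fk[e_{ir} : i \notin J]$. Given $f \in S(\lieg)^{\lieg}$, write it as a polynomial in the non-central variables with coefficients in $\fk[e_{jr} : j \in J]$; it suffices to show $f$ has no non-central variables, i.e. $f$ has degree $0$ in the $e_{ir}$ with $i \notin J$. Suppose not. The key point is that for $i \notin J$ there exists $j \in I$ with $[e_j, e_i] \neq 0$ in $\liel$ (otherwise $e_i$ would be central, contradicting $i \notin J$); translating to $\lieg$, for a suitable choice of $s$ the derivation $\on{ad}(e_{js})$ moves $e_{ir}$ to a nonzero combination of variables $e_{k, r+s}$, and by choosing $s$ large in the $\ii = \fn$ case (or arbitrary in the $\ii = \fz$ case) one can arrange the output to involve variables of $x$-degree strictly larger than anything in the support of $f$. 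A careful bookkeeping argument — isolating the top $x$-degree part, or the lexicographically leading term among non-central variables — then shows that $\on{ad}(e_{js}) \cdot f \neq 0$, a contradiction. This forces $f \in \fk[e_{jr} : j \in J, r \in \ii]$.

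The main obstacle I anticipate is making the "degree-raising kills the leading term" argument fully rigorous when $\liel$ is not semisimple: the derivations $\on{ad}(e_{js})$ act on the non-central quotient $\liel / C(\liel)$, and one must ensure that the induced action on $S(\lieg)/(e_{jr})$ has no invariants beyond the constants. A clean way to handle this is to use the $x$-degree grading: each $\on{ad}(e_{js})$ is homogeneous of $x$-degree $s$, so it suffices to work with $x$-homogeneous $f$, and then among the finitely many variables appearing, choose $j$ and $s$ so that the resulting term cannot be cancelled. Finally, the "in particular" clause follows formally: when $C(\liel) \neq \{0\}$ the set $J$ is nonempty, the $e_{jr}$ are algebraically independent in $S(\lieg)$ (they are distinct elements of a polynomial generating set), so $\fk[e_{jr} \mid j \in J, r \in \ii]$ is a free polynomial algebra on these generators.
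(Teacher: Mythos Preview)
Your proposal is correct and follows essentially the same route as the paper: write $S(\lieg)$ as a free module over $\fk[e_{jr}\mid j\in J,\,r\in\ii]$ on monomials in the non-central variables, then for a nonzero such monomial pick $i\notin J$ with $\omega(i,r)>0$, choose $j$ with $[e_j,e_i]\neq 0$, and apply $\on{ad}(e_{js})$ with $s>\on{deg}_x(f)$ so that the resulting $e_{k,r+s}$ cannot cancel against anything already present. One small correction: in the loop case $\ii=\fz$ you still need $s>\on{deg}_x(f)$ (not ``arbitrary''), since otherwise the new variable could collide with one already in the support of $f$; the paper handles both cases uniformly with this choice of $s$.
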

\begin{proof}
	The method of proof is highly depends on \cite[Lemma 3.2]{BT}. If $C(\liel)\neq \left \{ 0 \right \}$. It is obvious that $e_{jr}\in S(\lieg)^{\lieg} $ for each $j\in J$ and $r\in \ii$. Let $I(\lieg)=\fk[ e_{jr}\mid j\in J, r\in\ii ]$, then $I(\lieg)$ is a subalgebra of $S(\lieg)^{\lieg}$ generated by $\{e_{jr}\mid j\in J, r\in \ii\}.$ Since the elements $\{e_{ir}\mid i\in I,r\in \ii\}$ give a basis for $\lieg$, it follows that
  \[ S(\lieg)=\fk[ e_{jr}\mid j\in J, r\in\ii ][e_{ir}\mid i\in I\xg J, r\in\ii], \]
	\[ I(\lieg)=\fk[ e_{jr}\mid j\in J, r\in\ii ] \]
	with both being free polynomial algebras. Hence, $S(\lieg)$ is free as an $I(\lieg)$ module with basis $$\left\{ \prod_{i\in I\xg J, r\in \ii} e_{ir}^{	\omega(i,r)} \mid \omega\in \Omega_1 \right\},$$
	where $\Omega_1=\{\omega: I\xg J\times \ii \ra \fn\mid \omega(i,r)=0 \text{ for all but finite many }(i,r)\in I\times \ii \}.$
	
	Given $f\in S(\lieg)^{\lieg}, $ we can write
	\[ f=\sum_{\omega\in \Omega_1} c_{\omega} \prod_{i\in I\xg J, r\in \ii}e_{ir}^{\omega(i,r)} \]
	for all $c_{\omega}\in I(\lieg)$, all but finitely many of which are zero. In order to prove $S(\lieg)^{\lieg}\subseteq I(\lieg),$ we have to prove that $c_{\omega}=0$ for all non-zero $\omega.$
	
	Now, fix a non-zero $\omega$ appearing in the expression of $f$ such that $\omega(j,r)>0$ for $j\in I\xg J, r\in \ii.$ If $J=I$, we have that $S(\lieg)=I(\lieg)\subseteq S(\lieg)^{\lieg}\subseteq S(\lieg)$, so $S(\lieg)^{\lieg}=I(\lieg)$. Let $J\subsetneqq I$, since $e_j\not\in C(\liel), $ there is $i\in I\xg J$ such that $[e_i,e_j]\neq 0.$ Namely, $a_{ijk}\neq 0$ for some $k\in I.$
	For each integer $s>\on{deg}_x(f),$ which implies that $s$ is larger than all $r^\prime \in\ii$ such that $\omega(i^\prime,r^\prime)>0$ for $(i^\prime,r^\prime)\in I\xg J\times \ii,$ we have
	\begin{multline}\label{3.1}
      \on{ad}(e_{is})(f)= \sum_{\omega\in \Omega_1} c_{\omega} \sum_{\substack{j\in I\xg J, r\in \ii\\ \omega(j,r)>0} }\left (\sum_{k\in I\xg J} a_{ijk}  \omega(j,r)e_{k,r+s} +\sum_{k\in J} a_{ijk}  \omega(j,r)e_{k,r+s} \right ) \\ \times  e_{jr}^{\omega(j,r)-1}\prod_{\substack{j^\prime\in I\xg J, r^\prime\in \ii\\(j^\prime,r^\prime)\neq (j,r)}}e_{j^\prime r^\prime}^{\omega(j^\prime,r^\prime)}.  
	\end{multline}

 Thanks to the choice of $s$, the coefficient of
\[ e_{k,r+s} e_{jr}^{\omega(j,r)-1} \prod_{\substack{j^\prime\in I\xg J, r^\prime\in \ii\\(j^\prime,r^\prime)\neq (j,r)}}e_{j^\prime r^\prime}^{\omega(j^\prime,r^\prime)} \]
is $ a_{ijk} c_{\omega} \omega(j,r) $ for all $k\in I\xg J,$ and the coefficient of
	\[  e_{jr}^{\omega(j,r)-1} \prod_{\substack{j^\prime\in I\xg J, r^\prime\in \ii\\(j^\prime,r^\prime)\neq (j,r)}}e_{j^\prime r^\prime}^{\omega(j^\prime,r^\prime)} \]
	is $ a_{ijk} c_{\omega} \omega(j,r)e_{k,r+s} $ for all $k\in J.$ They must be zero since $f\in S(\lieg)^{\lieg}.$ As $\omega(j,r)>0$ and $ a_{ijk}\neq 0 $ for some $k\in I,$ we conclude that $c_{\omega}=0$ which implies that $S(\lieg)^{\lieg}\subseteq I(\lieg)$.

 The arguments are similar if $C(\liel)=\left \{ 0 \right \}$ which will be omitted here.
\end{proof}

\begin{theorem}\label{3.1.2}
If $\liel$ is a Lie algebra over $\fk$, then the center $Z(\lieg)$ of $U(\lieg)$ is equal to $\fk[ e_{jr}\mid j\in J, r\in\ii ]$. In particular, if $C(\liel)\neq \left \{ 0 \right \}$, $Z(\lieg)$ is freely generated by $\{ e_{jr}\mid j\in J, r\in\ii \}$.
\end{theorem}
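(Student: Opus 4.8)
The plan is to deduce this from Lemma~\ref{3.1.1} by transporting the description of $S(\lieg)^{\lieg}$ across the associated graded of the PBW filtration $\on{F}_\bullet U(\lieg)$ of \S\ref{2}. One inclusion is immediate: by Lemma~\ref{2.3} every $e_{jr}$ with $j\in J$ is central in $U(\lieg)$, so the commutative subalgebra $\fk[e_{jr}\mid j\in J, r\in\ii]$ is contained in $Z(\lieg)$. For the reverse inclusion I would work in $\on{gr}U(\lieg)\cong S(\lieg)$, using that $e_{ir}$ sits in filtration degree $r+1$ with symbol the element $e_{ir}\in\lieg\se S(\lieg)$, and that (\ref{2.1}) together with Lemma~\ref{3.1.1} gives $\on{gr}Z(\lieg)\se S(\lieg)^{\lieg}=\fk[e_{ir}\mid i\in J, r\in\ii]$.

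Next I would identify the associated graded of the subalgebra $A:=\fk[e_{jr}\mid j\in J, r\in\ii]\se U(\lieg)$. Since $\on{gr}$ of a filtered subalgebra embeds into $\on{gr}U(\lieg)=S(\lieg)$ and is generated by the symbols of the algebra generators, and since those symbols $\{e_{jr}\mid j\in J,r\in\ii\}$ are distinct members of the basis $\{e_{ir}\}$ of $\lieg$ (hence algebraically independent in $S(\lieg)$), one gets $\on{gr}A=\fk[e_{jr}\mid j\in J,r\in\ii]\se S(\lieg)$. Sandwiching,
\[ \fk[e_{jr}\mid j\in J,r\in\ii]=\on{gr}A\ \se\ \on{gr}Z(\lieg)\ \se\ \fk[e_{jr}\mid j\in J,r\in\ii], \]
so $\on{gr}A=\on{gr}Z(\lieg)$, and the inclusion $A\se Z(\lieg)$ induces an isomorphism on associated gradeds.

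It then remains to promote this to the equality $A=Z(\lieg)$ of filtered algebras, and this is the step I expect to be the only real obstacle: for the loop algebra ($\ii=\fz$) the PBW filtration is not bounded below, so a naive successive-approximation argument need not terminate. To fix this I would first reduce to homogeneous elements for the $\ii$-grading of $\lieg$ by $x$-degree: $U(\lieg)$ is graded by $x$-degree, and both $A$ and $Z(\lieg)$ are graded subalgebras (the homogeneous components of a central element are again central, since the grading is by an abelian group and $U(\lieg)$ is generated by homogeneous elements); on the component of $x$-degree $N$ a monomial $e_{i_1r_1}\cdots e_{i_sr_s}$ with $r_1+\cdots+r_s=N$ has filtration degree $N+s\ge N$, so there the filtration is bounded below. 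For $z\in Z(\lieg)$ homogeneous of $x$-degree $N$ and filtration degree $d$, I would pick (using the isomorphism of the previous paragraph) a homogeneous $w\in A$ of $x$-degree $N$ and filtration degree $\le d$ with $\on{gr}_d(w)=\on{gr}_d(z)$; then $z-w\in Z(\lieg)$ is homogeneous of $x$-degree $N$ with filtration degree $<d$, and iterating terminates once the degree reaches $N$, yielding $z\in A$. Hence $Z(\lieg)=\fk[e_{jr}\mid j\in J,r\in\ii]$. For the ``in particular'' clause, when $C(\liel)\ne\{0\}$ we have $J\ne\varnothing$ and $\on{gr}A=\fk[e_{jr}\mid j\in J,r\in\ii]$ is a polynomial algebra; a standard filtered-algebra argument (any nonzero polynomial relation among the $e_{jr}$ in $U(\lieg)$ has a nonzero leading form, contradicting algebraic independence of their symbols in $S(\lieg)$) then shows $A$ is free on $\{e_{jr}\mid j\in J,r\in\ii\}$, so $Z(\lieg)$ is freely generated by this set. (When $J=\varnothing$ the same argument simply gives $Z(\lieg)=\fk$.)
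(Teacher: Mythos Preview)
Your proof is correct and follows essentially the same route as the paper: both establish the inclusion $A:=\fk[e_{jr}\mid j\in J,r\in\ii]\subseteq Z(\lieg)$, then use (\ref{2.1}) and Lemma~\ref{3.1.1} to obtain $\on{gr}A=\on{gr}Z(\lieg)=S(\lieg)^{\lieg}$ and deduce $A=Z(\lieg)$. The paper simply writes ``This implies $Z=Z(\lieg)$'' at that last step, whereas you explicitly address the fact that the PBW filtration is unbounded below in the loop case by passing to homogeneous $x$-degree components --- a worthwhile detail that the paper glosses over.
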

\begin{proof}
	The proof, which uses Lemma \ref{3.1.1} and (\ref{2.1}), is similar to the proof of \cite[Theorem 3.4]{BT}. Let $Z=\fk[ e_{jr}\mid j\in J, r\in\ii ]$, then $Z$ is a subalgebra of $Z(\lieg)$. We want to prove that $S(\lieg)^{\lieg}\subseteq\on{gr}Z $. If $C(\liel)=\left \{ 0 \right \}$, it follows from Lemma \ref{3.1.1} that $S(\lieg)^{\lieg}=\fk=\on{gr}Z$.
 
   Let $C(\liel)\neq \left \{ 0 \right \}$, then $Z$ is generated by $\{ e_{jr}\mid j\in J, r\in\ii \}$. For $j\in J$ and $r\in \ii$ we have that $e_{jr}\in \on{F}_{r+1}U(\lieg),$ so $$\on{gr}(e_{jr})=e_{jr}\in S(\lieg).$$ From Lemma \ref{3.1.1} we know that $\{ e_{jr}\mid j\in J, r\in\ii \}$ are lifts of the algebraically independent generators of $S(\lieg)^{\lieg}$. Thereby we easily get $S(\lieg)^{\lieg}\subseteq\on{gr}Z $. Thanks to (\ref{2.1}), we also have $\on{gr}Z\subseteq\on{gr}Z(\lieg)\subseteq S(\lieg)^{\lieg}$, so $\on{gr}Z=\on{gr}Z(\lieg)=S(\lieg)^{\lieg}$. This implies $Z=Z(\lieg)$. 
\end{proof}

The structure of $Z(\lieg)$ is simple when $\lieg$ is a special Lie algebras.

\begin{proposition}\label{3.1.3}
If $\liel$ is a semisimple Lie algebra over $\fk$, then we have $Z(\lieg)=\fk$.
\end{proposition}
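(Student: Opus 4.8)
The plan is to deduce this immediately from Theorem \ref{3.1.2}, the only real content being the standard fact that a semisimple Lie algebra has trivial center. First I would recall that if $\liel$ is semisimple then $C(\liel)=\{0\}$: the center $C(\liel)$ is an abelian, hence solvable, ideal of $\liel$, so it is contained in the radical of $\liel$, which vanishes by semisimplicity. In the notation of subsection \ref{2CL}, this means the index set $J$ with $\{e_j\mid j\in J\}$ a basis of $C(\liel)$ is empty, i.e. $J=\varnothing$.

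Next I would simply invoke Theorem \ref{3.1.2}, which gives $Z(\lieg)=\fk[\,e_{jr}\mid j\in J,\ r\in\ii\,]$ for $\lieg=\liel[x]$ or $\liel(x)$. Since $J=\varnothing$, the polynomial algebra on the empty set of generators is just the ground field, so $Z(\lieg)=\fk$. Equivalently, one is in the case $C(\liel)=\{0\}$ of Theorem \ref{3.1.2}, for which the statement already reads $Z(\lieg)=\fk[\,e_{jr}\mid j\in\varnothing\,]=\fk$.

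There is essentially no obstacle here: the proposition is a direct corollary of Theorem \ref{3.1.2}. The only point meriting a sentence of justification is the vanishing of $C(\liel)$ for semisimple $\liel$, and since we are in characteristic zero with $\fk=\overline{\fk}$ this is classical (no nonzero abelian ideals in a semisimple Lie algebra). I would therefore keep the write-up to two or three lines, citing Theorem \ref{3.1.2} and the standard structure theory for the triviality of the center.
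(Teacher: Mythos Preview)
Your proposal is correct and matches the paper's own proof essentially line for line: the paper also simply observes that $C(\liel)=\{0\}$ for semisimple $\liel$ and then invokes Theorem~\ref{3.1.2} to conclude $Z(\lieg)=\fk$. Your added sentence justifying why the center vanishes (as an abelian, hence solvable, ideal contained in the radical) is a welcome elaboration of what the paper leaves implicit.
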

\begin{proof}
It is easy to know that $C(\liel)=\left \{ 0 \right \}$ since $\liel$ is a semisimple Lie algebra, we know that $Z(\lieg)=\fk$ by Theorem \ref{3.1.2}.
\end{proof}

\begin{remark}
   If $\lieg=\liel(x)$, the above proof is unaffected, as are all the following conclusions. 
\end{remark}

\subsection{Lie superalgebra}
\begin{lemma}\label{3.2.1}
	If $\liel$ is a Lie superalgebra over $\fk$, then the invariant algebra $S(\lieg)^{\lieg}$ is equal to $\fk[ e_{jr}\mid j\in J, r\in\ii ]$. In particular, if $C(\liel)\neq \left \{ 0 \right \}$, $S(\lieg)^{\lieg}$ is freely generated by $\{ e_{jr}\mid j\in J, r\in \ii \}.$
\end{lemma}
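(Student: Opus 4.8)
The plan is to mirror the proof of Lemma \ref{3.1.1}, making the minimal modifications forced by the $\fz_2$-grading and the sign rules in the supersymmetric algebra. As before, the inclusion $\fk[e_{jr}\mid j\in J, r\in\ii]\subseteq S(\lieg)^{\lieg}$ is immediate from Lemma \ref{2.5} (which gives $C(\liel)\ten\fk[x^{\pm1}]\subseteq Z(\lieg)$) together with the fact that $e_{jr}$ is even, so it poses no sign subtleties. For the reverse inclusion, set $I(\lieg)=\fk[e_{jr}\mid j\in J, r\in\ii]$ and use the basis $\{e_{ir}\mid i\in I, r\in\ii\}$ of $\lieg$ from Lemma \ref{2.6}, split into the even basis $\{e_{kr}\mid k\in I_0, r\in\ii\}$ and the odd basis $\{e_{lr}\mid l\in I_1, r\in\ii\}$. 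Then $S(\lieg)$ is a free $I(\lieg)$-module with basis the monomials $\prod_{i\in I\xg J, r\in\ii} e_{ir}^{\omega(i,r)}$ where $\omega$ ranges over functions supported on finitely many pairs, subject to $\omega(l,r)\in\{0,1\}$ for $l\in I_1$ (since odd elements square to zero in the supersymmetric algebra) and $\omega(k,r)\in\fn$ for $k\in I_0$. Fixing a monomial basis ordering, every $f\in S(\lieg)^{\lieg}$ is written $f=\sum_\omega c_\omega \prod e_{ir}^{\omega(i,r)}$ with $c_\omega\in I(\lieg)$, and the goal is to show $c_\omega=0$ for all $\omega\neq 0$.

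The key step is the same separation-of-variables argument using a twisting variable $s$ chosen larger than $\on{deg}_x(f)$. Pick a nonzero $\omega$ appearing in $f$ and an index $j\in I\xg J$ with $\omega(j,r)>0$ for some $r$; if $J=I$ then $S(\lieg)=I(\lieg)$ and we are done, so assume $J\subsetneq I$. Since $e_j\notin C(\liel)$, there is $i$ with $[e_i,e_j]\neq 0$, i.e.\ $a_{ijk}\neq 0$ for some $k$. Apply $\on{ad}(e_{is})$ to $f$ and look at the coefficient of a suitable monomial involving $e_{k,r+s}$: because $s$ exceeds every $x$-degree occurring in $f$, the variable $e_{k,r+s}$ is "new" and cannot be cancelled by contributions from other $\omega'$, so the vanishing of $\on{ad}(e_{is})(f)$ forces $a_{ijk}\,\omega(j,r)\,c_\omega$ (up to an overall sign and a possible $\pm$ if $k\in I_1$) to be zero, whence $c_\omega=0$. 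One should handle the cases $i,j\in I_0$ and the case where $i$ or $j$ is odd: the Leibniz rule for $\on{ad}(e_{is})$ on a product of $e_{jr}$'s picks up Koszul signs $(-1)^{|e_{is}|(\cdots)}$, but these are just nonzero scalars and do not affect the conclusion that the coefficient of the new monomial is a nonzero multiple of $c_\omega$. One subtlety: if $e_{jr}$ is odd then $\omega(j,r)\in\{0,1\}$, so $\omega(j,r)=1$ and the ``$\omega(j,r)$'' factor from differentiation is just $1$ rather than a positive integer that might vanish in characteristic zero (here it cannot vanish anyway); if $e_{jr}$ is even the factor $\omega(j,r)$ is a positive integer, nonzero since $\on{ch}(\fk)=0$. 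Also note $j\in I\xg J$ may lie in $I_0$ or $I_1$, while the $i$ we produce can be chosen in $I_0$ or $I_1$ as needed, and $a_{ijk}$ may send us to $k\in I_0$ or $k\in I_1$; in every combination the argument goes through verbatim.

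The main obstacle — really the only place one must be careful rather than routine — is bookkeeping of the Koszul signs when expanding $\on{ad}(e_{is})$ acting on a supersymmetric monomial, and making sure that after collecting terms the coefficient of the distinguished ``new'' monomial $e_{k,r+s}\prod(\cdots)$ is genuinely a single nonzero scalar multiple of $c_\omega$ rather than a sign-twisted sum of several $c_{\omega'}$'s that could conspire to cancel. This is resolved exactly as in the Lie algebra case by the choice $s>\on{deg}_x(f)$: the pair $(k, r+s)$ does not occur among the variables of $f$, so the monomial $e_{k,r+s}\,e_{jr}^{\omega(j,r)-1}\prod_{(j',r')\neq(j,r)} e_{j'r'}^{\omega(j',r')}$ arises from $\on{ad}(e_{is})$ applied to $f$ only through the single summand indexed by $(\omega, j, r)$, so its coefficient is $\pm a_{ijk}\,\omega(j,r)\,c_\omega$ with a definite sign, and vanishing gives $c_\omega=0$. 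With that, $S(\lieg)^{\lieg}\subseteq I(\lieg)$, hence equality; freeness of the generating set when $C(\liel)\neq\{0\}$ follows since $\{e_{jr}\mid j\in J, r\in\ii\}$ is part of the chosen basis of $\lieg$, so these even generators are algebraically independent in $S(\lieg)$. The case $C(\liel)=\{0\}$, i.e.\ $J=\varnothing$, is the same argument with $I(\lieg)=\fk$ and is omitted just as in Lemma \ref{3.1.1}.
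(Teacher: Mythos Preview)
Your proposal is correct and follows essentially the same approach as the paper's own proof: both reduce to the argument of Lemma~\ref{3.1.1}, replace the free module basis $\Omega_1$ by its super analogue (with $\omega(i,r)\in\{0,1\}$ for $i\in I_1$), and observe that the extra Koszul sign $(-1)^{\on{sgn}(\omega,j,r)}$ in the expansion of $\on{ad}(e_{is})(f)$ is nonzero and therefore does not affect the conclusion $c_\omega=0$. Your write-up is in fact more detailed than the paper's in its discussion of the sign bookkeeping and the parity cases for $j$.
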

\begin{proof}
    The proof is essentially the same as Lemma \ref{3.1.1}, except that we need to consider the odd elements. We use essentially the same notations as in the argumentation process of Lemma \ref{3.1.1}. 
    
    If $C(\liel)\neq \left \{ 0 \right \}$. It is obvious that $e_{jr}\in S(\lieg)^{\lieg} $ for each $j\in J$ and $r\in \ii.$ Since the elements $\{e_{ir}\mid i\in I,r\in \ii\}$ give a basis for $\lieg$, it follows that $$S(\lieg)=\fk[ e_{jr}\mid j\in I_0, r\in\ii ]\otimes\Lambda[e_{ir}\mid i\in I_1, r\in\ii],$$ where $\Lambda[e_{ir}\mid i\in I_1, r\in\ii]$ is the exterior algebra. Hence, $\fk[ e_{jr}\mid j\in I_0, r\in\ii ]$ and $I(\lieg)$ with both being free polynomial algebras and $S(\lieg)$ is free as an $I(\lieg)$ module with basis $$\left\{ \prod_{i\in I\xg J, r\in \ii} e_{ir}^{	\omega(i,r)} \mid \omega\in {\Omega_1}' \right\},$$
	where $${\Omega_1}'=\left\{\omega: I\xg J\times \ii \ra \fn\left| \begin{matrix}
		\omega(i,r)\ge 0, \forall (i,r)\in I_0\xg J\times\ii \text{ and } \omega(i,r)\in \{0,1\},\forall (i,r)\in I_1\times\ii, \\ \omega(i,r)=0 \text{ for all but finite many }(i,r)\in I\xg J\times \ii
	\end{matrix} \right.\right\}.$$
 The rest of the argument is the same as Lemma \ref{3.1.1} except for (\ref{3.1}).

If $i\in I_0$ or $i\in I_1$, we have
 \begin{multline}
      \on{ad}(e_{is})(f)= \sum_{\omega\in {\Omega_1}'} c_{\omega} \sum_{\substack{j\in I\xg J, r\in \ii\\ \omega(j,r)>0} }\left (\sum_{k\in I\xg J}(-1)^{\on{sgn}(\omega,j,r)} a_{ijk}  \omega(j,r)e_{k,r+s} \right.\\ \left.+\sum_{k\in J}(-1)^{\on{sgn}(\omega,j,r)} a_{ijk}  \omega(j,r)e_{k,r+s} \right )  \times  e_{jr}^{\omega(j,r)-1}\prod_{\substack{j^\prime\in I\xg J, r^\prime\in \ii\\(j^\prime,r^\prime)\neq (j,r)}}e_{j^\prime r^\prime}^{\omega(j^\prime,r^\prime)}. 
	\end{multline}
where $\on{sgn}(\omega,j,r)\in \left\{0,1\right\}$ depends on $\omega, j, r$. Since $(-1)^{\on{sgn}(\omega,j,r)}\ne 0$, we still have $c_{\omega}=0$ by the argument of Lemma \ref{3.1.1}.

The arguments are similar if $C(\liel)=\left \{ 0 \right \}$ which will be omitted here.
\end{proof}

  The proof of the following conclusion will be omitted here, which is identical to the Lie algebra.

\begin{theorem}\label{3.2.2}
If $\liel$ is a Lie superalgebra over $\fk$, then the center $Z(\lieg)$ of $U(\lieg)$ is equal to $\fk[ e_{jr}\mid j\in J, r\in\ii ]$. In particular, if $C(\liel)\neq \left \{ 0 \right \}$, $Z(\lieg)$ is freely generated by $\{ e_{jr}\mid j\in J, r\in\ii \}$.
\end{theorem}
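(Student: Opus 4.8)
The plan is to imitate the proof of Theorem \ref{3.1.2} essentially verbatim, replacing Lemma \ref{3.1.1} by its super-analogue Lemma \ref{3.2.1} and keeping the inclusion (\ref{2.1}), which by the remark following it is insensitive to the parity structure. First I would set $Z=\fk[\,e_{jr}\mid j\in J, r\in\ii\,]$. Since $C(\liel)$ consists only of even elements, $J\se I_0$, so by Lemma \ref{2.5} each $e_{jr}$ with $j\in J$ is an \emph{even} central element of $U(\lieg)$; hence $Z$ is a (commutative, purely even) polynomial subalgebra of $Z(\lieg)$, and it remains to prove the reverse containment $Z(\lieg)\se Z$.

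Next I would pass to associated graded algebras for the filtration $\on{F}_\bullet U(\lieg)$ of Section \ref{2}, using the identification $\on{gr}U(\lieg)\iso S(\lieg)$ (as graded $\lieg$-modules, with $S(\lieg)$ the supersymmetric algebra) recorded there, together with (\ref{2.1}), i.e. $\on{gr}Z(\lieg)\se S(\lieg)^{\lieg}$. If $C(\liel)=\{0\}$ then $J=\varnothing$ and Lemma \ref{3.2.1} gives $S(\lieg)^{\lieg}=\fk=\on{gr}Z$, so $\on{gr}Z(\lieg)\se\fk$ forces $Z(\lieg)=\fk=Z$. If $C(\liel)\neq\{0\}$, then for $j\in J$, $r\in\ii$ we have $e_{jr}\in\on{F}_{r+1}U(\lieg)$ and $\on{gr}(e_{jr})=e_{jr}\in S(\lieg)$, so by Lemma \ref{3.2.1} the family $\{\,e_{jr}\mid j\in J, r\in\ii\,\}$ consists of lifts of free algebra generators of $S(\lieg)^{\lieg}$; hence $S(\lieg)^{\lieg}\se\on{gr}Z$. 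Combining with $\on{gr}Z\se\on{gr}Z(\lieg)\se S(\lieg)^{\lieg}$ from (\ref{2.1}) gives $\on{gr}Z=\on{gr}Z(\lieg)$, whence $Z=Z(\lieg)$, and the freeness of the generating set is inherited from its algebraic independence in Lemma \ref{3.2.1}.

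The only thing that needs a moment's care beyond the Lie algebra case is that nothing in the filtration/grading argument is disturbed by signs: the filtration is defined on monomials by $x$-degree plus length, ignoring parity; the $\lieg$-action on $\on{gr}U(\lieg)\iso S(\lieg)$ is the (super)adjoint action, so $\on{gr}Z(\lieg)\se S(\lieg)^{\lieg}$ holds exactly as in (\ref{2.1}); and since $Z$ is purely even, the generators $e_{jr}$ ($j\in J$) contribute polynomial rather than exterior generators to $\on{gr}Z$, in agreement with the description of $S(\lieg)^{\lieg}$ in Lemma \ref{3.2.1}. I expect the write-up to be completely routine, with no genuine obstacle — this is why the paper suppresses it — the single point worth stating explicitly being that Lemma \ref{3.2.1} is applied with $S(\lieg)$ the supersymmetric algebra.
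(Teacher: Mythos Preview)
Your proposal is correct and follows precisely the route the paper intends: it reproduces the proof of Theorem~\ref{3.1.2} with Lemma~\ref{3.2.1} in place of Lemma~\ref{3.1.1} and invokes (\ref{2.1}), which is exactly what the paper means when it says the omitted proof ``is identical to the Lie algebra'' case. Your extra remarks on parity (that $J\se I_0$ so $Z$ is purely even, and that the filtration and (\ref{2.1}) are insensitive to signs) are accurate and harmless elaborations of points the paper leaves implicit.
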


\begin{proposition}\label{3.2.3}
If $\liel$ is a semisimple Lie superalgebra over $\fk$, then we have $Z(\lieg)=\fk$.
\end{proposition}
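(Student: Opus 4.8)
The plan is to reduce Proposition~\ref{3.2.3} to Theorem~\ref{3.2.2} exactly as Proposition~\ref{3.1.3} reduced the Lie-algebra statement to Theorem~\ref{3.1.2}. The key point is that a semisimple Lie superalgebra has trivial center, i.e. $C(\liel)=\{0\}$, so that $J=\varnothing$ in the notation of subsection~\ref{2CL}.

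First I would recall the definition of semisimplicity for Lie superalgebras (no nonzero solvable ideals, equivalently no nonzero abelian ideal) and observe that $C(\liel)$ is itself an abelian ideal of $\liel$; hence $C(\liel)=\{0\}$. Consequently, in the chosen homogeneous basis $\{e_i\mid i\in I\}$ of $\liel$ the index set $J\subseteq I_0$ with $\{e_j\mid j\in J\}$ a basis of $C(\liel)$ must be empty, so $J=\varnothing$. Then $\fk[e_{jr}\mid j\in J,\ r\in\ii]$ is, by convention, just the base field $\fk$.

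Next I would invoke Theorem~\ref{3.2.2}, which identifies $Z(\lieg)$ with $\fk[e_{jr}\mid j\in J,\ r\in\ii]$ for an arbitrary Lie superalgebra $\liel$ in characteristic zero. Combining this with $J=\varnothing$ gives $Z(\lieg)=\fk$ immediately, and this holds whether $\lieg=\liel[x]$ or $\lieg=\liel(x)$, since Theorem~\ref{3.2.2} treats both uniformly via $\ii$.

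There is essentially no obstacle here; the only thing to be slightly careful about is the chain of implications ``semisimple $\Rightarrow$ $C(\liel)=\{0\}$ $\Rightarrow$ $J=\varnothing$'', which is routine once one records that the center is an abelian (hence solvable) ideal. So the proof is just:

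\begin{proof}
Since $\liel$ is a semisimple Lie superalgebra, it has no nonzero solvable ideals; in particular the center $C(\liel)$, being an abelian ideal, satisfies $C(\liel)=\{0\}$. Hence $J=\varnothing$ and $\fk[e_{jr}\mid j\in J,\ r\in\ii]=\fk$. By Theorem~\ref{3.2.2} we conclude $Z(\lieg)=\fk$.
\end{proof}
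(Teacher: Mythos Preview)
Your proposal is correct and follows precisely the approach the paper intends: the paper omits the proof of Proposition~\ref{3.2.3} with the remark that it is identical to that of Proposition~\ref{3.1.3}, namely observe $C(\liel)=\{0\}$ by semisimplicity and then apply Theorem~\ref{3.2.2}.
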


\section{In prime characteristic}\label{4}

In this section, the characteristic of $\fk$ is prime $p$.

\subsection{Lie algebra}
\begin{lemma}\label{4.1.1}
	If $\liel$ is a Lie algebra over $\fk$, then the invariant algebra $S(\lieg)^{\lieg}  $ is generated by $\{ e_{jr}\mid j\in J, r\in \ii \}$ together with $\lieg^p:=\{ e^p\mid e\in \lieg \}. $ In fact, $S(\lieg)^{\lieg}$ is freely generated by \[\{ e_{jr}\mid j\in J, r\in \ii \}\cup \{ e_{ir}^p\mid i\in I\xg J, r\in \ii \}.\]
		
\end{lemma}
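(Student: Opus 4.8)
The plan is to rerun the proof of Lemma \ref{3.1.1}, now working modulo the $p$-th powers that enlarge the invariant algebra when $\on{ch}(\fk)=p$. First I would establish $S(\lieg)^{\lieg}\supseteq B$, where $B$ denotes the subalgebra generated by $\{e_{jr}\mid j\in J,r\in\ii\}\cup\lieg^p$. As in Lemma \ref{3.1.1}, $e_{jr}\in S(\lieg)^{\lieg}$ for $j\in J$; and for every $u\in\lieg$ we have $\on{ad}(u)(e^p)=p\,e^{p-1}[u,e]=0$ in $S(\lieg)$, since $\on{ad}(u)$ acts as a derivation and $p=0$ in $\fk$, so $\lieg^p\se S(\lieg)^{\lieg}$. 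Since $\fk$ is perfect, the subalgebra generated by $\lieg^p$ is $\fk[e_{ir}^p\mid i\in I,r\in\ii]$, whence $B=\fk[e_{jr}\mid j\in J,r\in\ii][e_{ir}^p\mid i\in I\xg J,r\in\ii]$. Using Lemma \ref{2.4} together with the fact that in a polynomial ring over a perfect field the monomials $\prod e_{jr}^{n_{jr}}\prod e_{ir}^{p\,m_{ir}}$ are pairwise distinct, $B$ is a free polynomial algebra on the displayed generators, and $S(\lieg)$ is free as a $B$-module with basis $\{\prod_{i\in I\xg J,r\in\ii}e_{ir}^{\omega(i,r)}\mid\omega\in\Omega\}$, where $\Omega$ is the set of finitely supported functions $\omega\colon(I\xg J)\times\ii\to\{0,1,\dots,p-1\}$ --- the $p$-bounded analogue of the set $\Omega_1$ of Lemma \ref{3.1.1}.

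For the reverse inclusion, take $f\in S(\lieg)^{\lieg}$ and write $f=\sum_{\omega\in\Omega}c_\omega\prod_{i\in I\xg J,r\in\ii}e_{ir}^{\omega(i,r)}$ with $c_\omega\in B$, all but finitely many zero; it suffices to show $c_\omega=0$ for $\omega\neq 0$. The one new observation, beyond Lemma \ref{3.1.1}, is that $\on{ad}(e_{is})$ still kills $B$ for every $i\in I$, $s\in\ii$: it is a derivation, it annihilates each central $e_{jr}$ ($j\in J$), and it annihilates each $e_{i'r'}^p$ ($i'\in I\xg J$) since $\on{ad}(e_{is})(e_{i'r'}^p)=p\,e_{i'r'}^{p-1}[e_{is},e_{i'r'}]=0$. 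Hence $\on{ad}(e_{is})(c_\omega)=0$, and applying $\on{ad}(e_{is})$ to $f$ reproduces formula (\ref{3.1}) of Lemma \ref{3.1.1} verbatim, with $\Omega_1$ replaced by $\Omega$. The argument of Lemma \ref{3.1.1} then applies: assuming some $c_\omega\neq 0$ with $\omega\neq 0$, fix $j\in I\xg J$ and $r\in\ii$ with $\omega(j,r)>0$; since $e_j\notin C(\liel)$ there is $i\in I\xg J$ with $a_{ijk}\neq 0$ for some $k\in I$; choosing $s>\on{deg}_x(f)$ isolates the variables $e_{k,r+s}$, and reading off the coefficient of the relevant monomial (treating $k\in I\xg J$ and $k\in J$ separately, exactly as in Lemma \ref{3.1.1}) forces $a_{ijk}\,\omega(j,r)\,c_\omega=0$. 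Here it is essential that $0<\omega(j,r)<p$, so that $\omega(j,r)\neq 0$ in $\fk$; we conclude $c_\omega=0$, a contradiction. Thus $S(\lieg)^{\lieg}=B$, which gives both assertions of the lemma.

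The genuinely delicate point is the bookkeeping in the coefficient-extraction step: one must verify that, with the $p$-bounded index set $\Omega$ in place of $\Omega_1$, no $\omega'\neq\omega$ in $\Omega$ contributes to the monomial being read off. This goes as in \cite[Lemma 3.2]{BT}: differentiating lowers an exponent $\omega(j,r)$ to $\omega(j,r)-1\in\{0,\dots,p-1\}$, which stays within the basis range, while for $s$ large the factor $e_{k,r+s}$ has strictly larger $x$-degree than anything in the support of any $\omega$ occurring in $f$, so the target monomial determines both the differentiated index $(j,r)$ and the contributing $\omega$. The only other routine item is the algebraic independence of the asserted free generators, which reduces to $\fk$ being perfect, as noted above.
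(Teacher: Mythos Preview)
Your proof is correct and follows essentially the same approach as the paper's: both replace the index set $\Omega_1$ of Lemma \ref{3.1.1} by its $p$-bounded analogue (the paper calls it $\Omega_2$), observe that $\lieg^p\se S(\lieg)^{\lieg}$ and hence $I(\lieg)$ (your $B$) is killed by every $\on{ad}(e_{is})$, and then rerun the coefficient-extraction argument, the crucial new point being that $0<\omega(j,r)<p$ forces $\omega(j,r)\neq 0$ in $\fk$. Your write-up is somewhat more explicit about why the bookkeeping still works (no collisions among contributing $\omega$'s) and invokes perfectness of $\fk$ to identify the subalgebra generated by $\lieg^p$, but these are elaborations rather than a different route.
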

\begin{proof}
 The method of proof is highly depends on \cite[Lemma 3.2]{BT}. It is obvious that $e_{jr}\in S(\lieg)^{\lieg} $ for each $j\in J$ and $r\in \ii.$ Since $p>0,$ we have that $\lieg^p\subseteq S(\lieg)^{\lieg}. $  Let $I(\lieg) $ be the subalgebra of $S(\lieg)^{\lieg}$ generated by $\{e_{jr}\mid j\in J, r\in \ii\}$ and $\lieg^p.$ Since the elements $\{e_{ir}\mid i\in I,r\in \ii\}$ give a basis of $\lieg,$
 it follows that
 \[ S(\lieg)=\fk[ e_{jr}\mid j\in J, r\in\ii ][e_{ir}\mid i\in I\xg J, r\in\ii], \]
	\[ I(\lieg)=\fk[ e_{jr}\mid j\in J, r\in\ii ][e_{ir}^p\mid i\in I\xg J, r\in\ii] \]
	with both being free polynomial algebras. Hence, $S(\lieg)$ is free as an $I(\lieg)$ module with basis $$\left\{ \prod_{i\in I\xg J, r\in \ii} e_{ir}^{	\omega(i,r)} \mid \omega\in \Omega_2 \right\},$$
	where $$\Omega_2=\left\{\omega: I\xg J\times \ii \ra \fn\left| \begin{matrix}
		0\leq\omega(i,r)<p \text{ for all } (i,r)\in I\xg J\times\ii, \\ \omega(i,r)=0 \text{ except finite }(i,r)\in I\xg J\times \ii
	\end{matrix} \right.\right\}.$$
	
	Given $f\in S(\lieg)^{\lieg}, $ we can write
	\[ f=\sum_{\omega\in \Omega_2} c_{\omega} \prod_{i\in I\xg J, r\in \ii}e_{ir}^{\omega(i,r)} \]
	for all $c_{\omega}\in I(\lieg)$, all but finitely many of which are zero. In order to prove $S(\lieg)^{\lieg}\subseteq I(\lieg),$ we have to prove that $c_{\omega}=0$ for all non-zero $\omega.$
	
	Now, fix a non-zero $\omega$ appearing in the expression of $f$ such that $\omega(j,r)>0$ for $j\in I\xg J, r\in \ii.$ If $J=I$, we have that $S(\lieg)=I(\lieg)\subseteq S(\lieg)^{\lieg}\subseteq S(\lieg)$, so $S(\lieg)^{\lieg}=I(\lieg)$. Let $J\subsetneqq I$, since $e_j\not\in C(\liel), $ there is $i\in I\xg J$ such that $[e_i,e_j]\neq 0.$ Namely, $a_{ijk}\neq 0$ for some $k\in I.$
	For each integer $s>\on{deg}_x(f),$ which implies that $s$ is larger than all $r^\prime \in\ii$ such that $\omega(i^\prime,r^\prime)>0$ for $(i^\prime,r^\prime)\in I\xg J\times \ii,$ we have
	\begin{multline}\label{4.2}
      \on{ad}(e_{is})(f)= \sum_{\omega\in \Omega_2} c_{\omega} \sum_{\substack{j\in I\xg J, r\in \ii\\ 0<\omega(j,r)<p} }\left (\sum_{k\in I\xg J} a_{ijk}  \omega(j,r)e_{k,r+s} +\sum_{k\in J} a_{ijk}  \omega(j,r)e_{k,r+s} \right ) \\ \times  e_{jr}^{\omega(j,r)-1}\prod_{\substack{j^\prime\in I\xg J, r^\prime\in \ii\\(j^\prime,r^\prime)\neq (j,r)}}e_{j^\prime r^\prime}^{\omega(j^\prime,r^\prime)}.
	\end{multline}
	
	Thanks to the choice of $s$, the coefficient of
	\[ e_{k,r+s} e_{jr}^{\omega(j,r)-1} \prod_{\substack{j^\prime\in I\xg J, r^\prime\in \ii\\(j^\prime,r^\prime)\neq (j,r)}}e_{j^\prime r^\prime}^{\omega(j^\prime,r^\prime)} \]
	is $ a_{ijk} c_{\omega} \omega(j,r) $ for all $k\in I\xg J,$ and the coefficient of
	\[  e_{jr}^{\omega(j,r)-1} \prod_{\substack{j^\prime\in I\xg J, r^\prime\in \ii\\(j^\prime,r^\prime)\neq (j,r)}}e_{j^\prime r^\prime}^{\omega(j^\prime,r^\prime)} \]
	is $ a_{ijk} c_{\omega} \omega(j,r)e_{k,r+s} $ for all $k\in J.$ They must be zero since $f\in S(\lieg)^{\lieg}.$ As $0<\omega(j,r)<p$ and $ a_{ijk}\neq 0 $ for some $k\in I,$ we conclude that $c_{\omega}=0$ which implies that $S(\lieg)^{\lieg}\subseteq I(\lieg)$.
\end{proof}

\begin{remark}
   If $\lieg$ is the shifted current algebra defined in \cite[3.2]{BT}, using Lemma \ref{4.1.1} we can have that \cite[Lemma 3.2]{BT}.
\end{remark}

Using the restricted structure, define the $p$-center $Z_p(\lieg)$
of $U_p(\lieg)$ to be the subalgebra of $Z(\lieg)$ generated by $x^p-x^{[p]}$ for all $x\in\lieg$. Since the $p$-th power map is $p$-semilinear, we have that
\[Z_p(\lieg)=\fk[e_{ir}^p-e_{ir}^{[p]}=(e_ix^r)^p-e_i^{[p]}x^{rp}\mid i\in I, r\in \ii].
\]

\begin{theorem}\label{4.1.2}
If $\liel$ is a Lie algebra over $\fk$, then the center $Z(\lieg)  $ is generated by $\{ e_{jr}\mid j\in J, r\in \ii \}$ and $Z_p(\lieg). $ In fact, $Z(\lieg)$ is freely generated by 
\begin{equation}\label{4.6}
	\{ e_{jr}\mid j\in J, r\in \ii \}\cup \{ e_{ir}^p-e_{ir}^{[p]}\mid i\in I\xg J, r\in \ii \}.
 \end{equation}
\end{theorem}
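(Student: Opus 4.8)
The plan is to run the filtered-to-graded argument of Theorem \ref{3.1.2} once more, this time feeding in Lemma \ref{4.1.1} as the invariant-theory input and keeping track of the leading symbols of the $p$-center generators. First I would check that every element of (\ref{4.6}) lies in $Z(\lieg)$: the $e_{jr}$ with $j\in J$ are central by Lemma \ref{2.3}, while each $e_{ir}^p-e_{ir}^{[p]}$ is central because $x^p-x^{[p]}$ lies in the center of the enveloping algebra of any restricted Lie algebra. Let $Z\se Z(\lieg)$ be the subalgebra generated by (\ref{4.6}). I would also record that $Z$ already contains the full $p$-center $Z_p(\lieg)$, hence coincides with the subalgebra generated by $\{e_{jr}\mid j\in J,r\in\ii\}$ together with $Z_p(\lieg)$: for $j\in J$ the element $e_j^{[p]}$ is again central in $\liel$ (since $\on{ad}(e_j^{[p]})=(\on{ad}e_j)^p=0$), so $e_{jr}^{[p]}=e_j^{[p]}\ten x^{rp}$ is a $\fk$-linear combination of the $e_{k,rp}$ with $k\in J$, and therefore $e_{jr}^p-e_{jr}^{[p]}\in\fk[e_{js}\mid j\in J,s\in\ii]\se Z$. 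This is also why the $J$-indexed $p$-center generators are absent from (\ref{4.6}).

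Next I would pass to the associated graded algebra $\on{gr}U(\lieg)\iso S(\lieg)$. For $j\in J$ one has $e_{jr}\in\on{F}_{r+1}U(\lieg)$, so $\on{gr}(e_{jr})=e_{jr}\in S(\lieg)$; for $i\in I\xg J$, the element $e_{ir}^p$ has filtration degree $(r+1)p$ whereas $e_i^{[p]}\ten x^{rp}$ has filtration degree $rp+1$, which is strictly smaller since $p\geq 2$, so $\on{gr}(e_{ir}^p-e_{ir}^{[p]})=e_{ir}^p$ — exactly the computation recorded just before (\ref{2.1}). Thus $\on{gr}Z$ contains $\{e_{jr}\mid j\in J,r\in\ii\}\cup\{e_{ir}^p\mid i\in I\xg J,r\in\ii\}$, which by Lemma \ref{4.1.1} freely generates $S(\lieg)^{\lieg}$; hence $S(\lieg)^{\lieg}\se\on{gr}Z$. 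Combining with (\ref{2.1}), which gives $\on{gr}Z\se\on{gr}Z(\lieg)\se S(\lieg)^{\lieg}$, all inclusions become equalities, and since $Z\se Z(\lieg)$ have the same associated graded, $Z=Z(\lieg)$.

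For the freeness assertion I would invoke the standard lifting lemma: the leading symbols of the elements of (\ref{4.6}) are the free polynomial generators of $S(\lieg)^{\lieg}$ provided by Lemma \ref{4.1.1}, and each such symbol is homogeneous (of degree $r+1$ or $(r+1)p$); consequently a nontrivial polynomial relation among the elements of (\ref{4.6}) would, on extracting its top filtration-degree component, produce a nontrivial polynomial relation among algebraically independent homogeneous elements of $S(\lieg)$, which is impossible. The steps here are all routine; the only places that require a moment's care — and the only things genuinely new compared with Theorem \ref{3.1.2} — are the degree inequality $rp+1<(r+1)p$ that determines $\on{gr}(e_{ir}^p-e_{ir}^{[p]})$, and the verification that the $J$-indexed $p$-center generators are redundant, both of which are needed to make the free generating set in the statement match precisely.
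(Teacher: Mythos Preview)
Your proof is correct and follows essentially the same filtered-to-graded strategy as the paper: compute the leading symbols of the elements in (\ref{4.6}), match them with the free generators of $S(\lieg)^{\lieg}$ supplied by Lemma \ref{4.1.1}, and then squeeze using (\ref{2.1}) to conclude $Z=Z(\lieg)$. You include more detail than the paper does---in particular the check that the $J$-indexed $p$-center elements are already in $Z$ and the explicit freeness-lifting argument---but the underlying approach is identical.
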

\begin{proof}
	The proof, which uses Lemma \ref{4.1.1} and (\ref{2.1}), is similar to the proof of \cite[Theorem 3.4]{BT}. Let $Z$ be the subalgebra of $Z(\lieg)$ generated by (\ref{4.6}). We want to prove that $S(\lieg)^{\lieg}\subseteq\on{gr}Z $. 
 
 For $j\in J, i\in I\xg J$ and $r\in \ii$ we have that $e_{jr}\in \on{F}_{r+1}U(\lieg)$ and $$e_{ir}^p-e_{ir}^{[p]}\in \on{F}_{(r+1)p}U(\lieg),$$ so $\on{gr}(e_{jr})=e_{jr}\in S(\lieg)$ and $\on{gr}(e_{ir}^p-e_{ir}^{[p]})=e_{ir}^p\in S(\lieg)$. From Lemma \ref{4.1.1} we know that (\ref{4.6}) are lifts of the algebraically independent generators of $S(\lieg)^{\lieg}$. Thereby we easily get $S(\lieg)^{\lieg}\subseteq\on{gr}Z $. Thanks to (\ref{2.1}), we also have $\on{gr}Z\subseteq\on{gr}Z(\lieg)\subseteq S(\lieg)^{\lieg}$, so $\on{gr}Z=\on{gr}Z(\lieg)=S(\lieg)^{\lieg}$. This implies $Z=Z(\lieg)$. 
\end{proof}

\begin{remark}\label{4.1.04}
 If $\lieg$ is the shifted current algebra defined in \cite[3.2]{BT}, using Theorem \ref{4.1.2} we can have that \cite[Theorem 3.4]{BT}.   
\end{remark}

Similar to the case of characteristic zero, we have the following proposition.

\begin{proposition}\label{4.1.3}
If $\liel$ is a semisimple Lie algebra over $\fk$, then we have $Z(\lieg)$ is freely generated by $$\{ e_{ir}^p-e_{ir}^{[p]}\mid i\in I, r\in \ii \}.$$
\end{proposition}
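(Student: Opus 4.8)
The plan is to derive Proposition \ref{4.1.3} as an immediate corollary of Theorem \ref{4.1.2}, exactly in the spirit of how Proposition \ref{3.1.3} was deduced from Theorem \ref{3.1.2} in the zero-characteristic case. Since $\liel$ is semisimple, its center $C(\liel)$ is $\{0\}$, so the index set $J$ indexing a basis of $C(\liel)$ is empty. Substituting $J=\varnothing$ into the free generating set (\ref{4.6}) from Theorem \ref{4.1.2} collapses the first set $\{e_{jr}\mid j\in J, r\in\ii\}$ to the empty set and turns the second set $\{e_{ir}^p-e_{ir}^{[p]}\mid i\in I\xg J, r\in\ii\}$ into $\{e_{ir}^p-e_{ir}^{[p]}\mid i\in I, r\in\ii\}$. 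Hence $Z(\lieg)$ is freely generated by $\{e_{ir}^p-e_{ir}^{[p]}\mid i\in I, r\in\ii\}$, which is precisely the $p$-center $Z_p(\lieg)$.

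The key steps, in order, are: (i) invoke the standard fact that a (finite-dimensional) semisimple Lie algebra has trivial center, so $C(\liel)=\{0\}$ and we may take $J=\varnothing$ in the setup of subsection \ref{2CL}; (ii) apply Theorem \ref{4.1.2} verbatim with this choice of $J$; (iii) observe that the first family of generators is vacuous and the second becomes indexed by all of $I$, and note that by Theorem \ref{4.1.2} these generators are algebraically independent, giving the asserted free generation. One should also remark, as in the zero-characteristic subsection, that the argument is unaffected if $\lieg=\liel(x)$ rather than $\liel[x]$, so the statement holds for both the current algebra and the loop algebra.

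There is essentially no obstacle here: the only thing one must be slightly careful about is that ``semisimple'' be interpreted so that $C(\liel)=\{0\}$ genuinely holds (for finite-dimensional semisimple Lie algebras over a field this is automatic from $\liel=[\liel,\liel]$ together with the structure theory, or one simply takes it as part of the working definition used throughout the paper). Everything else is a direct specialization of Theorem \ref{4.1.2}, so the proof is a one-line deduction and the write-up need only record it.
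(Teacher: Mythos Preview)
Your proposal is correct and matches the paper's own proof essentially verbatim: the paper simply notes that $C(\liel)=\{0\}$ because $\liel$ is semisimple and then invokes Theorem \ref{4.1.2} to conclude $Z(\lieg)=Z_p(\lieg)=\fk[e_{ir}^p-e_{ir}^{[p]}\mid i\in I, r\in\ii]$.
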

\begin{proof}
It is easy to know that $C(\liel)=\left \{ 0 \right \}$ since $\liel$ is a semisimple Lie algebra, we know that $$Z(\lieg)=Z_p(\lieg)=\fk[e_{ir}^p-e_{ir}^{[p]}\mid i\in I, r\in \ii]$$ by Theorem \ref{4.1.2}.
\end{proof}
\subsection{Lie superalgebra}
\begin{lemma}\label{4.2.1}
	If $\liel$ is a Lie superalgebra over $\fk$, then the invariant algebra $S(\lieg)^{\lieg}  $ is generated by $\{ e_{jr}\mid j\in J, r\in \ii \}$ together with $\lieg_{\overline{0}}^p:=\{ e^p\mid e\in \lieg_{\overline{0}} \}. $ In fact, $S(\lieg)^{\lieg}$ is freely generated by 
 \[\{ e_{jr}\mid j\in J, r\in \ii \}\cup \{ e_{ir}^p\mid i\in I_0\xg J, r\in \ii \}.
	\]
\end{lemma}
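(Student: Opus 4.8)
The plan is to run the same argument as in Lemma \ref{4.1.1}, keeping track of the $\mathbb{Z}_2$-grading exactly as was done when passing from Lemma \ref{3.1.1} to Lemma \ref{3.2.1}. First I would observe that $e_{jr}\in S(\lieg)^{\lieg}$ for $j\in J$, $r\in\ii$ (the central elements), and that for any even element $e\in\lieg_{\overline 0}$ we have $e^p\in S(\lieg)^{\lieg}$, since $\on{ad}(x)$ acts as a derivation of the supersymmetric algebra and, on the even generators $e_{ir}$ with $i\in I_0$, $\on{ad}(x)(e_{ir}^p)=p\,e_{ir}^{p-1}\on{ad}(x)(e_{ir})=0$; note here that one only gets $p$-th powers as invariants from the \emph{even} variables — for an odd generator $e_{lr}$ with $l\in I_1$ we have $e_{lr}^2=0$ already in $S(\lieg)$, so odd variables contribute nothing of the form $e^p$. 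This explains why the index set in the free generators is $I_0\xg J$ rather than $I\xg J$.

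Next I would set up the structure of $S(\lieg)$ as a free module over the candidate invariant subalgebra $I(\lieg):=\fk[e_{jr}\mid j\in J,r\in\ii][e_{ir}^p\mid i\in I_0\xg J,r\in\ii]$. Using Lemma \ref{2.6}, $S(\lieg)=\fk[e_{jr}\mid j\in I_0,r\in\ii]\otimes\Lambda[e_{lr}\mid l\in I_1,r\in\ii]$, so $S(\lieg)$ is free over $I(\lieg)$ with basis the monomials $\prod_{i\in I\xg J,r\in\ii}e_{ir}^{\omega(i,r)}$, where $\omega$ ranges over the set ${\Omega_2}'$ of finitely supported functions with $0\le\omega(i,r)<p$ for $i\in I_0\xg J$ and $\omega(i,r)\in\{0,1\}$ for $i\in I_1$. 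Writing a given $f\in S(\lieg)^{\lieg}$ as $f=\sum_{\omega}c_{\omega}\prod e_{ir}^{\omega(i,r)}$ with $c_{\omega}\in I(\lieg)$, I would then, for a fixed nonzero $\omega$ with $\omega(j,r)>0$ (some $j\in I\xg J$), pick $i\in I\xg J$ with $[e_i,e_j]\neq 0$ — so $a_{ijk}\neq 0$ for some $k$ — and an integer $s>\on{deg}_x(f)$, and compute $\on{ad}(e_{is})(f)$ just as in \eqref{4.2}, but now decorated with the superalgebra signs $(-1)^{\on{sgn}(\omega,j,r)}$ exactly as in the proof of Lemma \ref{3.2.1}. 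Since $s$ is chosen large, the monomials $e_{k,r+s}e_{jr}^{\omega(j,r)-1}\prod e_{j'r'}^{\omega(j'r')}$ are distinct basis elements, their coefficients $\pm a_{ijk}c_{\omega}\omega(j,r)$ must vanish, and since $(-1)^{\on{sgn}}\neq 0$, $a_{ijk}\neq 0$, and $\omega(j,r)\not\equiv 0\pmod p$, this forces $c_{\omega}=0$. Hence $S(\lieg)^{\lieg}\subseteq I(\lieg)$, and the reverse inclusion is the first paragraph; the $J=I$ and $C(\liel)=\{0\}$ cases are handled exactly as before.

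The only genuinely new point — and the place I would be most careful — is the interaction between the odd exterior variables and the $p$-th power invariants: one must check that no invariant of the form $e^p$ (or any other new invariant) arises from $\lieg_{\overline 1}$, and that when $\omega(j,r)>0$ with $j\in I_1$ the exponent $\omega(j,r)=1$ still gives a nonzero coefficient after applying $\on{ad}(e_{is})$ (it does, since the relevant coefficient is $\pm a_{ijk}c_{\omega}\cdot 1$). Everything else is a routine transcription of Lemma \ref{4.1.1} with the grading bookkeeping of Lemma \ref{3.2.1} inserted, so I would simply say the proof "combines the arguments of Lemma \ref{4.1.1} and Lemma \ref{3.2.1}" and spell out only the module basis ${\Omega_2}'$ and the signed version of \eqref{4.2}.
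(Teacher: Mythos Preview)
Your proposal is correct and follows essentially the same route as the paper's proof: identify $I(\lieg)=\fk[e_{jr}\mid j\in J][e_{ir}^p\mid i\in I_0\setminus J]$, exhibit $S(\lieg)$ as a free $I(\lieg)$-module on the monomials indexed by ${\Omega_2}'$, and kill the nonzero $c_\omega$ by applying $\on{ad}(e_{is})$ with $s>\on{deg}_x(f)$, carrying along the super signs exactly as in Lemma~\ref{3.2.1}. You are in fact slightly more explicit than the paper in two places (why odd generators contribute no $p$-th power invariants, and why $\omega(j,r)=1$ is still nonzero mod $p$ when $j\in I_1$), but these are elaborations of the same argument rather than a different one.
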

\begin{proof}
    The proof is essentially the same as Lemma \ref{4.1.1}, except that we need to consider the odd elements. We use essentially the same notations as in the argumentation process of Lemma \ref{4.1.1}.
    
    It is obvious that $e_{jr}\in S(\lieg)^{\lieg} $ for each $j\in J$ and $r\in \ii.$ Since $p>0,$ we have that $\lieg_{\overline{0}}^p\subseteq S(\lieg)^{\lieg}. $  Let $I(\lieg) $ be the subalgebra of $S(\lieg)^{\lieg}$ generated by $\{e_{jr}\mid j\in J, r\in \ii\}$ and $\lieg_{\overline{0}}^p.$ Since the elements $\{e_{ir}\mid i\in I,r\in \ii\}$ give a basis of $\lieg,$
 it follows that $$S(\lieg)=\fk[ e_{jr}\mid j\in I_0, r\in\ii ]\otimes\Lambda[e_{ir}\mid i\in I_1, r\in\ii],$$ where $\Lambda[e_{ir}\mid i\in I_1, r\in\ii]$ is the exterior algebra. Hence, $\fk[ e_{jr}\mid j\in I_0, r\in\ii ]$ and $I(\lieg)$ with both being free polynomial algebras and $S(\lieg)$ is free as an $I(\lieg)$ module with basis $$\left\{ \prod_{i\in I\xg J, r\in \ii} e_{ir}^{	\omega(i,r)} \mid \omega\in {\Omega_2}' \right\},$$
	where $${\Omega_2}'=\left\{\omega: I\xg J\times \ii \ra \fn\left| \begin{matrix}
		0\leq \omega(i,r)<p, \forall (i,r)\in I_0\xg J\times\ii \text{ and } \omega(i,r)\in \{0,1\},\forall (i,r)\in I_1\times\ii, \\ \omega(i,r)=0 \text{ for all but finite many }(i,r)\in I\xg J\times \ii
	\end{matrix} \right.\right\}.$$
 The rest of the argument is the same as Lemma \ref{4.1.1} except for (\ref{4.2}).

If $i\in I_0$ or $i\in I_1$, we have
 \begin{multline}
      \on{ad}(e_{is})(f)= \sum_{\omega\in {\Omega_2}'} c_{\omega} \sum_{\substack{j\in I\xg J, r\in \ii\\ \omega(j,r)>0} }\left (\sum_{k\in I\xg J}(-1)^{\on{sgn}(\omega,j,r)} a_{ijk}  \omega(j,r)e_{k,r+s} \right.\\ \left.+\sum_{k\in J}(-1)^{\on{sgn}(\omega,j,r)} a_{ijk}  \omega(j,r)e_{k,r+s} \right )  \times  e_{jr}^{\omega(j,r)-1}\prod_{\substack{j^\prime\in I\xg J, r^\prime\in \ii\\(j^\prime,r^\prime)\neq (j,r)}}e_{j^\prime r^\prime}^{\omega(j^\prime,r^\prime)}.  
	\end{multline}
where $\on{sgn}(\omega,j,r)\in \left\{0,1\right\}$ depends on $\omega, j, r$. Since $(-1)^{\on{sgn}(\omega,j,r)}\ne 0$, we still have $c_{\omega}=0$ by the argument of Lemma \ref{4.1.1}.
\end{proof}

\begin{remark}
  In fact, $\{ e_{ir}^p\mid i\in I\xg J, r\in \ii \}=\{ e_{ir}^p\mid i\in I_0\xg J, r\in \ii \}$ and $\lieg^p=\lieg_{\overline{0}}^p$. If $\lieg=\mathfrak{gl}_{m\mid n}[x]$, using Lemma \ref{4.2.1} we can have that \cite[Lemma 2.1]{CH}. 
\end{remark}

Similar to Lie algebra, the $p$-center $Z_p(\lieg)$ of $U(\lieg)$ is the subalgebra of $Z(\lieg)$ generated by $x^p-x^{[p]}$ for all $x\in \lieg_{\overline{0}}$, we still have that
$$Z_p(\lieg)=\fk[e_{ir}^p-e_{ir}^{[p]}=(e_ix^r)^p-e_i^{[p]}x^{rp}\mid i\in I_0, r\in \ii].$$

The proof of the following conclusion will be omitted here, which is identical to the Lie algebra.

\begin{theorem}\label{4.2.2}
If $\liel$ is a Lie superalgebra over $\fk$, then the center $Z(\lieg)  $ is generated by $\{ e_{jr}\mid j\in J, r\in \ii \}$ and $Z_p(\lieg). $ In fact, $Z(\lieg)$ is freely generated by 
	$$\{ e_{jr}\mid j\in J, r\in \ii \}\cup \{ e_{ir}^p-e_{ir}^{[p]}\mid i\in I_0\xg J, r\in \ii \}.$$
\end{theorem}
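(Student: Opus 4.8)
The plan is to follow verbatim the strategy used for Theorem \ref{4.1.2}, substituting Lemma \ref{4.2.1} for Lemma \ref{4.1.1}; the passage from Lie algebras to Lie superalgebras introduces no new difficulty because, at the level of the associated graded algebra, every generator in the claimed list lands in the (purely even) polynomial part of $S(\lieg)$.

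First I would let $Z$ denote the subalgebra of $U(\lieg)$ generated by $\{e_{jr}\mid j\in J,\ r\in\ii\}\cup\{e_{ir}^p-e_{ir}^{[p]}\mid i\in I_0\xg J,\ r\in\ii\}$. Each $e_{jr}$ is central by Lemma \ref{2.5}, and each $e_{ir}^p-e_{ir}^{[p]}$ lies in $Z_p(\lieg)\se Z(\lieg)$ by the restricted structure recalled in subsection \ref{2CL}; hence $Z\se Z(\lieg)$, and moreover $Z$ is generated by $\{e_{jr}\mid j\in J,\ r\in\ii\}$ together with $Z_p(\lieg)$, as asserted. Next I would pass to $\on{gr}U(\lieg)\iso S(\lieg)$. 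Since $e_{jr}\in\on{F}_{r+1}U(\lieg)$ we get $\on{gr}(e_{jr})=e_{jr}$, and since $e_{ir}^p-e_{ir}^{[p]}\in\on{F}_{(r+1)p}U(\lieg)$ with $\on{deg}_x$-degree bookkeeping giving $\on{deg}(e_i^{[p]}\ten x^{rp})=rp+1<(r+1)p$, we get $\on{gr}(e_{ir}^p-e_{ir}^{[p]})=e_{ir}^p$ in $S(\lieg)$. By Lemma \ref{4.2.1} the family $\{e_{jr}\mid j\in J,\ r\in\ii\}\cup\{e_{ir}^p\mid i\in I_0\xg J,\ r\in\ii\}$ freely generates $S(\lieg)^{\lieg}$, so the generators of $Z$ are lifts of a set of algebraically independent generators of $S(\lieg)^{\lieg}$; this yields $S(\lieg)^{\lieg}\se\on{gr}Z$.

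Combining this with (\ref{2.1}), which provides $\on{gr}Z\se\on{gr}Z(\lieg)\se S(\lieg)^{\lieg}$, gives $\on{gr}Z=\on{gr}Z(\lieg)=S(\lieg)^{\lieg}$, and the standard filtered--graded comparison forces $Z=Z(\lieg)$. Algebraic independence of the exhibited generators then follows at once, since their $\on{gr}$-images are algebraically independent in $S(\lieg)$. The only point that requires a moment's care --- and the closest thing to an obstacle --- is checking that the odd generators $e_{lr}$ with $l\in I_1$ contribute no further central elements: at the graded level they obey exterior (square-zero) relations rather than polynomial ones, and the $p$-th power map is defined only on $\lieg_{\overline{0}}$, so there is no analogue of $e_{lr}^p-e_{lr}^{[p]}$. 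But this is exactly what Lemma \ref{4.2.1} already encodes through the index set ${\Omega_2}'$ and the identity $\lieg^p=\lieg_{\overline{0}}^p$; once that lemma is in hand the remainder of the argument is word-for-word the Lie algebra case of Theorem \ref{4.1.2}.
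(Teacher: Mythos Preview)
Your proposal is correct and follows exactly the approach the paper itself indicates: the paper omits the proof of Theorem \ref{4.2.2} with the remark that it is identical to the Lie algebra case (Theorem \ref{4.1.2}), and you have carried out precisely that transcription, replacing Lemma \ref{4.1.1} by Lemma \ref{4.2.1} and invoking (\ref{2.1}) for the filtered--graded comparison. The additional commentary you supply on why odd elements contribute nothing is a helpful gloss but not strictly needed, since Lemma \ref{4.2.1} already absorbs that issue.
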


\begin{proposition}\label{4.2.3}
If $\liel$ is a semisimple Lie superalgebra over $\fk$, then we have $Z(\lieg)$ is freely generated by $$\{ e_{ir}^p-e_{ir}^{[p]}\mid i\in I_0, r\in \ii \}.$$
\end{proposition}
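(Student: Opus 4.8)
The plan is to reduce Proposition \ref{4.2.3} directly to Theorem \ref{4.2.2} together with the structural fact that a semisimple Lie superalgebra has no center. First I would recall that if $\liel$ is semisimple then $C(\liel)=\{0\}$, so the index set $J$ with $\{e_j\mid j\in J\}$ a basis of $C(\liel)$ is empty. Feeding $J=\varnothing$ into Theorem \ref{4.2.2}, the first part of the free generating set, $\{e_{jr}\mid j\in J, r\in\ii\}$, is empty, and $I_0\xg J = I_0$, so the second part becomes $\{e_{ir}^p-e_{ir}^{[p]}\mid i\in I_0, r\in\ii\}$. Hence $Z(\lieg)$ is freely generated by exactly that set, which is the assertion. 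In this framing the proof is a one-line corollary, and I would write it that way, mirroring the proofs of Proposition \ref{4.1.3} and Proposition \ref{3.2.3}.

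The one genuine point that deserves a sentence is \emph{why} $C(\liel)=\{0\}$ for a semisimple Lie superalgebra $\liel$: the center $C(\liel)$ is an ideal of $\liel$, it is abelian, hence solvable, and a semisimple Lie superalgebra by definition has no nonzero solvable (equivalently, no nonzero abelian) ideal; therefore $C(\liel)=\{0\}$. One should note that the relevant notion here is the one for which ``semisimple'' means ``zero radical'' — this is the convention implicitly used for the Lie-algebra case in Proposition \ref{4.1.3} as well, so consistency is maintained. With $J=\varnothing$ established, I would then simply invoke Theorem \ref{4.2.2}.

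I would also remark, for completeness, that with $J=\varnothing$ the algebra $Z(\lieg)$ coincides with the $p$-center $Z_p(\lieg)$: indeed $Z_p(\lieg)=\fk[e_{ir}^p-e_{ir}^{[p]}\mid i\in I_0, r\in\ii]$ by the description given just before Theorem \ref{4.2.2}, and this is precisely the free polynomial algebra on the stated generators. So the conclusion can be recorded both as ``$Z(\lieg)$ is freely generated by $\{e_{ir}^p-e_{ir}^{[p]}\mid i\in I_0, r\in\ii\}$'' and as ``$Z(\lieg)=Z_p(\lieg)$'', exactly paralleling the statement and proof of Proposition \ref{4.1.3} in the Lie algebra case. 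The free generation (algebraic independence of the $e_{ir}^p-e_{ir}^{[p]}$) is already part of the content of Theorem \ref{4.2.2}, so nothing extra is needed there.

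There is essentially no obstacle: the entire weight of the argument sits in Theorem \ref{4.2.2} and in Lemma \ref{4.2.1} behind it, both of which are available. The only thing to be careful about is not to accidentally include $I_1$ (odd) indices: the $p$-th power map on $\lieg$ is only defined using $\lieg_{\overline0}$, and $\lieg^p=\lieg_{\overline0}^p$, so the generating set is indexed by $I_0$, not by all of $I$ — this matches the statement as written. A short proof along these lines:

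\begin{proof}
Since $\liel$ is a semisimple Lie superalgebra, its center $C(\liel)$ is an abelian (hence solvable) ideal of $\liel$, so $C(\liel)=\{0\}$; that is, $J=\varnothing$. Applying Theorem \ref{4.2.2} with $J=\varnothing$, the set $\{e_{jr}\mid j\in J, r\in\ii\}$ is empty and $I_0\xg J=I_0$, so
$$Z(\lieg)=Z_p(\lieg)=\fk[e_{ir}^p-e_{ir}^{[p]}\mid i\in I_0, r\in \ii]$$
is freely generated by $\{e_{ir}^p-e_{ir}^{[p]}\mid i\in I_0, r\in\ii\}$.
\end{proof}
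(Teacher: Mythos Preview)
Your proposal is correct and follows exactly the approach the paper intends: the paper omits the proof of Proposition~\ref{4.2.3}, stating it is identical to that of Proposition~\ref{4.1.3}, namely observe $C(\liel)=\{0\}$ so $J=\varnothing$ and apply Theorem~\ref{4.2.2}. Your write-up simply makes explicit the one-line justification for $C(\liel)=\{0\}$ that the paper leaves implicit.
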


\begin{remark}\label{4.2.10}
 If $\lieg=\mathfrak{gl}_{m\mid n}[x]$, using Theorem \ref{4.2.2} we can have that \cite[Theorem 2.3]{CH}.    
\end{remark}

\bigskip
\noindent
\textbf{Acknowledgments.}
The second named author would like to give some special thanks to Ke Ou for his valuable insight and guidance with this paper.

\end{document}